\numberwithin{equation}{section}
\def\N{\mathbb{N}{\ssc\,}}
\def\Z{\mathbb{Z}{\ssc\,}}
\def\R{\mathbb{R}{\ssc\,}}
\def\C{\mathbb{C}{\ssc\,}}
\def\ssc{\scriptscriptstyle}
\def \SVir{{\rm{SVir}}}
\def \Vir{{\rm{Vir}}}
\def \W{{\mathcal W}}
\def \R{{\mathcal R}}
\def \V{{\mathcal{V}}}
\def \NS{{\mathcal {NS}}}
\def \hh{{\mathfrak h}}
\def \<{\langle}
\def \>{\rangle}
\def\vs{\vspace*}
\def \be{\begin{equation}\label}
\def \ee{\end{equation}}
\def \bex{\begin{example}\label}
\def \eex{\end{example}}
\def \bl{\begin{lem}\label}
\def \el{\end{lem}}
\def \bt{\begin{thm}\label}
\def \et{\end{thm}}
\def \bp{\begin{prop}\label}
\def \ep{\end{prop}}
\def \br{\begin{rem}\label}
\def \er{\end{rem}}
\def \bc{\begin{coro}\label}
\def \ec{\end{coro}}
\def \bd{\begin{de}\label}
\def \ed{\end{de}}
\newtheorem{thm}{Theorem}[section]
\newtheorem{prop}[thm]{Proposition}
\newtheorem{coro}[thm]{Corollary}
\newtheorem{example}[thm]{Example}
\newtheorem{lem}[thm]{Lemma}
\newtheorem{rem}[thm]{Remark}
\newtheorem{de}[thm]{Definition}
\def\adddot{$\!\!\!${\bf}\ \ }
\makeatletter \@addtoreset{equation}{section}
\begin{document}

\title{A family of non-weight modules over the super-Virasoro algebras }
\author{Hengyun Yang}
\address{\it Hengyun Yang: Department of Mathematics, Shanghai Maritime     University, Shanghai 201306, China}
\email{hyyang@shmtu.edu.cn}

\author{Yufeng Yao}
\address{\it Yufeng Yao: Department of Mathematics, Shanghai Maritime   University, Shanghai 201306, China}
\email{yfyao@shmtu.edu.cn}

\author{Limeng Xia$^*$}
\address{\it Limeng Xia:  Institute of Applied System Analysis, Jiangsu University, Jiangsu Zhenjiang, 212013,
China}\email{xialimeng@ujs.edu.cn}
\thanks{$^*$The correspongding author}
\thanks{\rm This work is supported by the National Natural Science Foundation of China (Grant Nos. 11771279, 11771142, 11871249,11801363, 11671138 and 11571008), the Natural Science Foundation of Jiangsu (Grant No. BK20171294) and the Natural Science Foundation of Shanghai (Grant No. 16ZR1415000).}

\begin{abstract} In this paper, we construct a  family of non-weight modules over the  super-Virasoro algebras. Those modules when regarded as modules of the Ramond  algebra and further restricted as modules over the Cartan subalgebra $\hh$ are free of rank $1$, while when regarded as modules of the Neveu-Schwarz  algebra and further restricted as modules over the Cartan subalgebra $\mathfrak{H}$ are free of rank $2$. We obtain a sufficient and necessary condition for such modules to be simple. Moreover, we determine the isomorphism classes of these modules. Finally, we show that these modules constitute a complete classification of free $U(\hh)$-modules of rank $1$ over the  super-Virasoro algebra of Ramond type, and also constitute a complete classification of free $U(\mathfrak{H})$-modules of rank $2$ over the super-Virasoro algebra of Neveu-Schwarz type.
\end{abstract}

\maketitle
\vskip-.3cm
\qquad{\small{\bf Key words:} Ramond algebra, Neveu-Schwarz algebra, simple module, non-weight module}

\qquad{\small\bf 2010 MSC}: {\small 17B10, 17B65, 17B66, 17B68}\vs{12pt}

\section{Introduction}

The Virasoro algebra, as a universal central extension of the Witt algebra, is an important infinite dimensional Lie algebra. It has a basis $\{L_{n},C \mid n \in \Z\}$ subject to the following relations
\begin{eqnarray*}
    &&[L_{m}, L_{n}]=(m-n)L_{m+n}+\delta_{m+n}\frac{m^3-m}{12}C,\\
    &&[L_m,C]=0, \ \ \ \ \ \ m,n\in\Z.
\end{eqnarray*}
The super-Virasoro
algebras, namely, the Neveu-Schwarz algebra and the Ramond algebra, are the $N=1$ super-symmetry extensins of the Virasoro algebra (see \cite{NS} and \cite{RP}). 
The Virasoro algebra and super-Virasoro
algebras are important algebras in both mathematics and physics. They are closely related to the conformal field theory and the string theory.

In the study of representation theory of Lie algebras and Lie
superalgebras, one of the most important problems is to classify
simple modules. The simple weight modules of the Virasoro algebra
and super-Virasoro algebras have been studied extensively. In
1992, Mathieu \cite{M} (also see \cite{KS, Su1}) classified all
simple Harish-Chandra modules over the Virasoro algebra, i.e.,
simple weight modules with finite-dimensional weight spaces. Su
\cite{Su} settled similar problems over the super-Virasoro
algebras. While the study of simple non-weight modules is much
more difficult.

Free $U(\hh)$-modules constitute an important class of non-weight modules (in general, we use $U(\hh)$ to denote the universal enveloping algebra of the Cartan subalgebra $\hh$). They were first introduced by Nilsson \cite{N1} for the complex matrices algebra $\mathfrak{sl}_{n+1}$ in 2015. For a finite dimensional simple Lie algebra, Nilsson \cite{N2} showed that free $U(\hh)$-modules can exist only when it is of type $A$ or $C$, and further classified the free $U(\hh)$-modules of rank $1$. Motivated by \cite{N1}, Tan and Zhao \cite{TZ} proved that any free $U(\hh)$-module of rank 1  over the Witt algebra is isomorphism to $\Omega (\lambda,\alpha)$ for some $\lambda\in\C^*, \alpha\in\C$. With aid of the method  of \cite{N1} and the results of \cite{TZ}, such  simple modules for some  infinite dimensional Lie algebras related to the Virasoro algebra, such as the Heisenberg-Virasoro algebra \cite{CG}, the algebra $\Vir(a,b)$ \cite{HCS}, the Schrodinger-Virasoro algebra \cite{WZ} and the Block algebra \cite{CY, LG},  have been studied. Moreover, in \cite{MP},  a family of free $U(\hh)$-modules of arbitrary rank were constructed over $\mathfrak{sl}_2$.  The free $U(\hh)$-modules of infinite rank over the Virasoro algebra were studied in \cite{CG1}.

In \cite{LPX1}, Whittaker modules over the super-Virasoro algebras are introduced, and
necessary and sufficient conditions for irreducibility of these modules are given.  The authors further studied the simple restricted  modules over the super-Virasoro algebra of Neveu-Schwarz type in \cite{LPX2}.
Recently, free $U(\hh)$-modules over the basic Lie superalgebras have been studied in \cite{CZ}. It was shown that
$\mathfrak{osp}(1|2n)$ is the only basic Lie superalgebra that admits such modules.

For convenience, we let $\hh$ and $\mathfrak{H}$ denote the Cartan
subalgebras of $\SVir[0]$ and $\SVir[\frac12]$, respectively.  In
this paper, we mainly concern on the non-weight representations
over the super-Virasoro algebra $\SVir[\epsilon]$ ($\epsilon=0\,
\mbox{or} \, \frac{1}{2}$).
More precisely, we will classify the free  $U(\hh)$-modules of rank $1$ over $\SVir[0]$ and the free  $U(\mathfrak{H})$-modules of rank $2$ over $\SVir[\frac12]$. 

This paper is organized as follows. In section 2, we mainly recall some basic definitions and construct a family of non-weight modules over the super-Virasoro algebras of Ramond type $\SVir[0]$ and Neveu-Schwarz type $\SVir[\frac{1}{2}]$. We further determine the simplicity and isomorphism classes of these modules. Section 3 is devoted to classifying the free $U(\hh)$-modules of rank $1$ over the  Ramond algebra $\SVir[0]$. Those modules can also be regarded as free $U(\C{L_0})$-modules of rank $2$. In Section 4, we classify the free $U(\mathfrak{H})$-modules of rank $2$ over the Neveu-Schwarz algebra $\SVir[\frac12]$. We show that the category of free $U(\hh)$-modules of rank $1$ over the super-Virasoro algebra of Ramond type is equivalent to the category of free $U(\mathfrak{H})$-modules of rank $2$ over the super-Virasoro algebra of Neveu-Schwarz type.

\section{A family of non-weight modules over the super-Virasoro algebras}

Throughout the paper, we denote by $\C,\C^*,\Z, \Z_+$ and $\N$ the sets of all complex numbers, nonzero complex numbers, integers, non-negative integers and positive integers, respectively. We always assume that the base field is the complex number field $\C$. All vector superspaces (resp. superalgebras, supermodules) $V=V_{\bar 0}\oplus V_{\bar 1}$ are defined over $\C$, and sometimes simply called spaces (resp. algebras, modules). We call elements in $V_{\bar 0}$ and $V_{\bar 1}$ odd and even, respectively. Both odd and even elements are referred to homogeneous ones.
Throughout this paper, a module $M$ of a superalgebra $A$ always means a supermodule, i.e.,  $A_{\bar{i}}\cdot M_{\bar{j}}\subseteq M_{\bar{i}+\bar{j}}$ for all $\bar{i}, \bar{j}\in\Z_2$. For an automorphism $\sigma$ of a superalgebra $A$ and an $A$-module $M$, we have a twist $A$-module with the same underling space as $M$ and the twist module structure defined as $a\circ v=\sigma(a)\cdot v$ for $a\in A, v\in M$. This twist module is usually denoted by $M^{\sigma}$.
There is a parity change functor $\Pi$ from the category of $A$-modules to itself. That is, for any module $M=M_{\bar{0}}\oplus M_{\bar{1}}$, we have a new  module $\Pi(M)$ with the same underlining space with the parity exchanged, i.e., $(\Pi(M))_{\bar{0}}=M_{\bar{1}}$ and $(\Pi(M))_{\bar{1}}=M_{\bar{0}}$.

The following notion of the super-Virasoro algebra is a natural super generalization of the Virasoro algebra, which was first discovered
by Neveu-Schwarz and Ramond, respectively (see \cite{NS} and \cite{RP}).
\bd{dcobo}\label{def of super-Vir}
{\em
    Let $\epsilon =0\, \mbox{or} \, \frac{1}{2}$. \emph{The super-Virasoro algebra $\SVir[\epsilon]$ (without center)}, is an infinite dimensional Lie
superalgebra whose even part is spanned by $\{L_{n} \mid n \in \Z\}$ and odd part is spanned by $\{G_{r} \mid r \in \epsilon +\Z \}$ subject to the following relations
\begin{eqnarray*}
    &&[L_{m}, L_{n}]=(m-n)L_{m+n},\\
    &&[L_m,G_r]=(\frac{1}{2}m-r)G_{m+r},\\
    &&[G_r,G_s]=2L_{r+s}, \ \ \ \ \ \ m,n\in\Z, r,s\in \epsilon+\Z.
\end{eqnarray*}
$\SVir[0]:=\R$ is called \emph{the Ramond algebra} and $\SVir[\frac12]:=\NS$ is
called \emph{the Neveu-Schwarz algebra}.
}\ed

\begin{rem}
\begin{itemize}
\item[(1)] The even part $\SVir[\epsilon]_{\bar 0}$ of the super-Virasoro algebra $\SVir[\epsilon]$ is isomorphic to the centerless Virasoro algebra, i.e., the Witt algebra $\W$.

\item[(2)] For $\lambda\in\mathbb{C}^*$, let $\sigma_{\lambda}:\SVir[\epsilon]\longrightarrow\SVir[\epsilon]$ be a linear transformation with $\sigma_{\lambda}(L_m)=\lambda^m L_m$ and $\sigma_{\lambda}(G_r)=\lambda^rG_r$, where $m\in\mathbb{Z}, r\in\epsilon+\Z$. Then $\sigma_{\lambda}$ is an automorphism of $\SVir[\epsilon]$.
\end{itemize}
\end{rem}

The Ramond algebra $\mathcal{R}$ has a $2$-dimensional canonical Cartan subalgebra $\hh =\C L_0\oplus\C G_0$. While the  Neveu-Schwarz algebra $\NS$ has a $1$-dimensional canonical Cartan subalgebra $\mathfrak{H} =\C L_0$.

Denote by $\C[x]$ the polynomial algebra over $\C$ in  indeterminate $x$. For $\lambda\in\C^*, \alpha\in\C$, we define the action of $\W$ on $\Omega (\lambda,\alpha):=\C[x] $ as follows
\begin{eqnarray}\label{Virasoro-module}
L_m f(x)=\lambda^m(x+m\alpha)f(x+m),  \ \ \ \ \ \ m\in\Z, f(x)\in\C[x].
\end{eqnarray}

For later use, we need the following results on $\Omega (\lambda,\alpha)$. 

\begin{lem}\adddot\label{Witt-module}(cf. \cite[\S4.3]{LZ1}, \cite[Theorem 8, Theorem 11]{TZ})
Keep notations as above, then the following statements on $\Omega (\lambda,\alpha)$ hold.
\begin{itemize}
\item[\rm(1)]
    $ \Omega(\lambda,\alpha)$ is a $\W$-module.
\item[\rm(2)] $\Omega(\lambda,\alpha)$ is simple if and only if
$\lambda,\alpha\in\C^*$. Moreover, $x\Omega(\lambda,0)$ is the
unique nonzero proper submodule of $\Omega(\lambda,0)$ with
codimension $1$. \item[\rm(3)] Any $\W$-module that is free of
rank $1$ when restricted to  $U(\C L_0)$ is isomorphic to some
$\Omega(\lambda,\alpha)$ for $\lambda\in\C^*, \alpha\in\C$.
\end{itemize}
\end{lem}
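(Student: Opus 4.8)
The plan is to handle the three assertions in turn, with all of the real difficulty concentrated in (3). For (1), I would verify the single defining relation of $\W$ directly from \eqref{Virasoro-module}. Computing $L_mL_nf(x)=\lambda^{m+n}(x+m\alpha)(x+m+n\alpha)f(x+m+n)$ and subtracting the symmetric expression $L_nL_mf(x)$, the quadratic and the pure $x$ terms cancel, leaving $\lambda^{m+n}(m-n)\bigl(x+(m+n)\alpha\bigr)f(x+m+n)=(m-n)L_{m+n}f(x)$, which is exactly the Witt relation; linearity extends this from monomials to all of $\C[x]$.

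For (2) the decisive observation is that $L_0$ acts on $\Omega(\lambda,\alpha)=\C[x]$ as multiplication by $x$. Hence any submodule $M$ is stable under multiplication by $x$, i.e.\ is an ideal of $\C[x]$, so $M=h(x)\C[x]$ for a single polynomial $h$. Feeding the generator $h$ into $L_m$ gives $\lambda^m(x+m\alpha)h(x+m)\in M$, whence the divisibility $h(x)\mid(x+m\alpha)h(x+m)$ for every $m\in\Z$. If $\deg h\ge1$ and $r$ is a root of $h$ of largest real part, then for $m\ge1$ no root $r_i-m$ of $h(x+m)$ can equal $r$ (its real part is $\Re(r_i)-m<\Re(r)$), so $r$ must come from the factor $(x+m\alpha)$, forcing $r=-m\alpha$; comparing $m=1$ and $m=2$ yields $\alpha=0$. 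This proves simplicity whenever $\alpha\in\C^*$. When $\alpha=0$ the condition becomes $h(x)\mid x\,h(x+m)$ for all $m$, so any nonzero root $r$ would force $r+m$ to be a root for all $m\in\Z$, impossible for a nonzero polynomial; thus $h(x)=cx^k$, and checking $x^k\mid x(x+m)^k$ (which fails for $k\ge2$) leaves only $k\in\{0,1\}$, identifying $x\C[x]$ as the unique proper nonzero submodule, of codimension $1$.

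For (3), let $V$ be free of rank $1$ over $U(\C L_0)=\C[L_0]$ with generator $v$, and identify $V$ with $\C[x]$ via $L_0^iv\leftrightarrow x^i$, so that $L_0$ is again multiplication by $x$. Write $L_mv=p_m(L_0)v$ for a polynomial $p_m$, with $p_0(x)=x$. Using $[L_m,L_0]=mL_m$, which gives the operator identity $L_mL_0^i=(L_0+m)^iL_m$, one computes $L_m(L_0^iv)=(L_0+m)^ip_m(L_0)v$; translating back, $L_m$ acts by $f(x)\mapsto p_m(x)f(x+m)$. Imposing $[L_m,L_n]=(m-n)L_{m+n}$ then produces the functional system
\[ p_m(x)\,p_n(x+m)-p_n(x)\,p_m(x+n)=(m-n)\,p_{m+n}(x),\qquad m,n\in\Z, \]
and the goal is to show its only solutions with $p_0(x)=x$ are $p_m(x)=\lambda^m(x+m\alpha)$ with $\lambda\in\C^*,\alpha\in\C$, which gives the isomorphism $V\cong\Omega(\lambda,\alpha)$.

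The main obstacle is solving this system, and within it the degree bound is the genuinely delicate point. Putting $n=-m$ gives $p_m(x)p_{-m}(x+m)-p_{-m}(x)p_m(x-m)=2mx\ne0$, so no $p_m$ vanishes. Comparing top coefficients shows the leading terms of the two products on the left always cancel, so that $\deg p_{m+n}\le\deg p_m+\deg p_n-1$ for $m\ne n$; this inequality alone is not enough, and one must exploit the whole family of relations to rule out $\deg p_m\ge2$ and conclude $\deg p_m\le1$ for all $m$. Once this is in hand, writing $p_m(x)=a_mx+b_m$ and substituting, the $x$-coefficients give $a_{m+n}=a_ma_n$ (so $a_m=\lambda^m$, with $\lambda=a_1\ne0$ forced by $a_1a_{-1}=a_0=1$), and the constant terms give a linear recursion solved by $b_m=m\alpha\lambda^m$. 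This pins down $p_m(x)=\lambda^m(x+m\alpha)$ and hence $V\cong\Omega(\lambda,\alpha)$, completing the classification.
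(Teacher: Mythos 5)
Your arguments for parts (1) and (2) are correct and complete. Note that the paper itself never proves this lemma at all: it is quoted from \cite[\S 4.3]{LZ1} and \cite[Theorem 8, Theorem 11]{TZ}, so there is no internal proof to compare against. For what it is worth, your treatment of (2) --- observing that $L_0$ acts as multiplication by $x$, so any submodule is an ideal $h(x)\C[x]$, then running a largest-real-part argument on the roots of $h$ against the divisibility $h(x)\mid (x+m\alpha)h(x+m)$ --- is the standard route and is sound.

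Part (3), however, contains a genuine gap, and it sits exactly at the step that is the real content of the cited Tan--Zhao theorem. You correctly reduce the classification to the system $p_m(x)p_n(x+m)-p_n(x)p_m(x+n)=(m-n)p_{m+n}(x)$ with $p_0(x)=x$, and correctly note the cancellation of top-degree terms, but then you write that ``one must exploit the whole family of relations to rule out $\deg p_m\ge 2$'' --- and you never do it. Since the linearization $p_m(x)=a_mx+b_m$ and everything after it is conditional on the bound $\deg p_m\le 1$, the classification is at that point asserted rather than proved. The gap can be closed from the identity you already isolated. Write $d_k=\deg p_k$. The case $n=-m$ reads $p_m(x)p_{-m}(x+m)-p_{-m}(x)p_m(x-m)=2mx$; your leading-coefficient computation shows the left side vanishes when $d_m=d_{-m}=0$, and otherwise has degree exactly $d_m+d_{-m}-1$, because the coefficient of $x^{d_m+d_{-m}-1}$ equals $m(d_m+d_{-m})$ times the product of the two (nonzero) leading coefficients. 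Hence $d_m+d_{-m}=2$ for every $m\neq 0$, so $(d_m,d_{-m})$ is $(1,1)$, $(2,0)$ or $(0,2)$. To kill $(2,0)$: if $p_{-m}\equiv c\in\C^*$ and $d_m=2$, apply the relation to the pair $(2m,-m)$, which gives $c\bigl(p_{2m}(x)-p_{2m}(x-m)\bigr)=3m\,p_m(x)$; the left side has degree $d_{2m}-1\le 1$ (since $d_{2m}\le 2$ by the sum rule for $\pm 2m$), while the right side has degree $2$, a contradiction. Thus $d_m=1$ for all $m\neq 0$, and your linear analysis then finishes. One smaller repair in that final analysis: the relation $a_{m+n}=a_ma_n$ is only available for $m\neq n$, so $a_2=a_1^2$ is not immediate; it follows, e.g., by comparing $a_6=a_5a_1=a_2a_1^4$ with $a_6=a_4a_2=a_2^2a_1^2$, using $a_k\neq 0$ for all $k$.
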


Let $V =\C [x^2]\oplus x\C [x^2]$. Then $ V$  is a $\Z_2$-graded vector space with $V_{\bar{0}}=\C [x^2]$ and $V_{\bar{1}}=x\C[x^2]$. The following result gives a precise construction of an $\R$-module structure on $V$.

\begin{prop}\adddot
    \label{prop1} For $\lambda\in\C^*,\alpha\in\C,  f(x^2)\in \C [x^2]$ and $xf(x^2)\in x\C [x^2]$, we define the action of $\R$ on $V$ as follows
    \begin{eqnarray}
    &&L_m f(x^2)=\lambda^m(x^2+m\alpha)f(x^2+m),\label{module1}\\
    &&L_m xf(x^2)=\lambda^m x (x^2+m\alpha+\frac {m}{2})f(x^2+m), \label{module2}\\
    &&G_m f(x^2)=\lambda^m x f(x^2+m) ,\label{module3}\\
    &&G_m xf(x^2)=\lambda^m  (x^2+2m\alpha)f(x^2+m) , \label{module4}
    \end{eqnarray}
where $m\in\Z$. Then $V$ is an $\R$-module under the action of (\ref{module1})-(\ref{module4}), which is denoted by $\Omega_{\R}(\lambda,\alpha)$.
\end{prop}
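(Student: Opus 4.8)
The plan is to verify directly that the operators defined by (\ref{module1})--(\ref{module4}) satisfy the three defining relations of $\R=\SVir[0]$ from Definition \ref{def of super-Vir}, treating each relation on the even part $V_{\bar 0}=\C[x^2]$ and the odd part $V_{\bar 1}=x\C[x^2]$ separately. The one structural point to keep in mind is that, since $V$ must be a supermodule, the bracket of the two odd generators $G_r,G_s$ has to act as the anticommutator $G_rG_s+G_sG_r$, whereas $[L_m,L_n]$ and $[L_m,G_r]$ act as ordinary commutators.

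First I would dispose of $[L_m,L_n]=(m-n)L_{m+n}$ by appealing to Lemma \ref{Witt-module}(1). Setting $y=x^2$, the action (\ref{module1}) of the even subalgebra $\W\cong\R_{\bar 0}$ on $V_{\bar 0}$ is exactly the Witt module $\Omega(\lambda,\alpha)$, while (\ref{module2}), after the identification $xf(x^2)\leftrightarrow f$ together with $x^2+m\alpha+\tfrac m2=y+m(\alpha+\tfrac12)$, is exactly $\Omega(\lambda,\alpha+\tfrac12)$. Thus $V$ restricted to $\W$ is $\Omega(\lambda,\alpha)\oplus\Omega(\lambda,\alpha+\tfrac12)$, and the $L$--$L$ relation holds on both components with no further work.

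Next I would check $[G_r,G_s]=2L_{r+s}$ by evaluating anticommutators. On $V_{\bar 0}$ the formulas (\ref{module3})--(\ref{module4}) give $G_mG_nf(x^2)=\lambda^{m+n}(x^2+2m\alpha)f(x^2+m+n)$, and symmetrizing in $m,n$ yields $2\lambda^{m+n}(x^2+(m+n)\alpha)f(x^2+m+n)=2L_{m+n}f(x^2)$. The computation on $V_{\bar 1}$ is parallel: one gets $G_mG_n\,xf(x^2)=\lambda^{m+n}x(x^2+m+2n\alpha)f(x^2+m+n)$, and the symmetrization reproduces precisely the shift $\tfrac{m+n}{2}$ and the term $(m+n)\alpha$ occurring in (\ref{module2}).

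Finally, the mixed relation $[L_m,G_r]=(\tfrac12 m-r)G_{m+r}$, acting as a commutator, is the core of the argument. On $V_{\bar 0}$ it collapses quickly to the scalar $(\tfrac m2-n)$ multiplying $G_{m+n}f(x^2)$. On $V_{\bar 1}$, by contrast, both $L_mG_n$ and $G_nL_m$ produce products of two linear factors in $x^2$, and the verification rests on the quadratic identity $(m-2n)(m+n)=m^2-mn-2n^2$, which forces the degree-two terms to cancel and leaves exactly $(\tfrac12 m-n)\lambda^{m+n}(x^2+2(m+n)\alpha)f(x^2+m+n)$, i.e.\ $(\tfrac12 m-n)G_{m+n}\,xf(x^2)$. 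I expect this cancellation on the odd part to be the main obstacle: it is the only genuinely nontrivial computation, and it is precisely where the coefficients $\tfrac m2$ in (\ref{module2}) and $2m\alpha$ in (\ref{module4}) are pinned down. Once all three relations are confirmed on $V_{\bar 0}$ and $V_{\bar 1}$, the proof is complete.
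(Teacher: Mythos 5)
Your proposal is correct and follows essentially the same route as the paper: a direct verification of the three defining relations of $\R$ separately on $V_{\bar 0}$ and $V_{\bar 1}$, with the odd--odd bracket acting as the anticommutator, and your computations for $[L_m,G_n]$ and $[G_m,G_n]$ (including the factorization $(m-2n)(m+n)=m^2-mn-2n^2$ behind the cancellation on the odd part) match the paper's line by line. The only refinement is your treatment of $[L_m,L_n]$ on $V_{\bar 1}$: instead of recomputing it explicitly as the paper does, you identify $V_{\bar 1}$ with the Witt module $\Omega(\lambda,\alpha+\tfrac12)$ via $xf(x^2)\leftrightarrow f(y)$, $y=x^2$, and invoke Lemma \ref{Witt-module}(1) for both graded pieces --- a legitimate and slightly cleaner shortcut that the paper only makes explicit later (cf.\ Theorem \ref{thm-1}(3)).
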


\begin{proof}
It follows from Lemma \ref{Witt-module} (1) that
\begin{equation*}\label{check1}
[L_m, L_n] f(x^2)=L_mL_nf(x^2)-L_nL_mf(x^2), \,\,\forall\,m,n\in\Z.
\end{equation*}
According to (\ref{module2}), we have
\begin{eqnarray*}
L_m L_n xf(x^2)\!\!\!&=\!\!\!&\lambda^nL_m x(x^2+n\alpha+\frac {n}{2})f(x^2+n)\nonumber\\
\!\!\!&=\!\!\!&\lambda^{m+n}x(x^2+m\alpha+\frac {m}{2})(x^2+n\alpha+\frac {n}{2}+m)f(x^2+m+n),
\end{eqnarray*}
which implies that
\begin{eqnarray*}
&&L_mL_nxf(x^2)-L_nL_mxf(x^2)\\
\!\!\!&=\!\!\!&\lambda^{m+n}x\big((x^2+m\alpha+\frac {m}{2})(x^2+n\alpha+\frac {n}{2}+m)\\
&&-(x^2+m\alpha+\frac {m}{2}+n)(x^2+n\alpha+\frac {n}{2})\big)f(x^2+m+n)\\
\!\!\!&=\!\!\!&\lambda^{m+n}(m-n)x\big(x^2+(m+n)\alpha+\frac {m+n}{2}\big)f(x^2+m+n)\\
\!\!\!&=\!\!\!&(m-n)L_{m+n} xf(x^2),\\
\!\!\!&=\!\!\!&[L_m,L_n] xf(x^2).
\end{eqnarray*}
Moreover, it follows from (\ref{module1})-(\ref{module3}) that
\begin{eqnarray*}
    &&L_mG_nf(x^2)-G_nL_mf(x^2)\\
    \!\!\!&=\!\!\!&\lambda^nL_m xf(x^2+n)-\lambda^mG_n (x^2+m\alpha)f(x^2+m)\\
    \!\!\!&=\!\!\!&\lambda^{m+n}x(x^2+m\alpha+\frac {m}{2})f(x^2+m+n)-\lambda^{m+n}x(x^2+m\alpha+n)f(x^2+m+n)\\
    \!\!\!&=\!\!\!&(\frac{m}{2}-n)G_{m+n} f(x^2)\\
    \!\!\!&=\!\!\!&[L_m,G_n] f(x^2).
\end{eqnarray*}
From (\ref{module1}), (\ref{module2}) and (\ref{module4}), we obtain that
\begin{eqnarray*}
    &&L_mG_nxf(x^2)-G_nL_mxf(x^2)\\
    \!\!\!&=\!\!\!&\lambda^nL_m (x^2+2n\alpha)f(x^2+n)-\lambda^mG_n x(x^2+m\alpha+\frac{m}{2})f(x^2+m)\\
    \!\!\!&=\!\!\!&\lambda^{m+n}\big((x^2+m\alpha)(x^2+m+2n\alpha)-(x^2+2n\alpha)(x^2+m\alpha+\frac{m}{2}+n)\big)f(x^2+m+n)\\
    \!\!\!&=\!\!\!&\lambda^{m+n}(\frac{m}{2}-n)\big(x^2+2(m+n)\alpha\big) f(x^2+m+n)\\
    \!\!\!&=\!\!\!&(\frac{m}{2}-n)G_{m+n} xf(x^2)\\
    \!\!\!&=\!\!\!&[L_m,G_n] xf(x^2).
\end{eqnarray*}
Furthermore, it follows from (\ref{module1}), (\ref{module3}) and (\ref{module4}) that
\begin{eqnarray*}
    &&G_mG_nf(x^2)+G_nG_mf(x^2)\\
    \!\!\!&=\!\!\!&\lambda^{m+n}(x^2+2m\alpha)f(x^2+m+n)+\lambda^{m+n}(x^2+2n\alpha)f(x^2+m+n)\\
    \!\!\!&=\!\!\!&2\lambda^{m+n}(x^2+(m+n)\alpha) f(x^2+m+n)\\
    \!\!\!&=\!\!\!&2L_{m+n} f(x^2)\\
    \!\!\!&=\!\!\!&[G_m,G_n] f(x^2).
\end{eqnarray*}
By (\ref{module2}), (\ref{module3}) and (\ref{module4}), we get
\begin{eqnarray*}
    &&G_mG_nxf(x^2)+G_nG_mxf(x^2)\\
    \!\!\!&=\!\!\!&\lambda^{m+n}x(x^2+2n\alpha+m)f(x^2+m+n)+\lambda^{m+n}x(x^2+2m\alpha+n)f(x^2+m+n)\\
    \!\!\!&=\!\!\!&2\lambda^{m+n}(x^2+(m+n)\alpha) f(x^2+m+n)\\
    \!\!\!&=\!\!\!&2L_{m+n} xf(x^2)\\
    \!\!\!&=\!\!\!&[G_m,G_n] xf(x^2).
\end{eqnarray*}
This completes the proof.
\end{proof}

\begin{rem}
Let $\lambda\in\C^*, \alpha\in\mathbb{C}$, then $\Omega_{\R}(\lambda,\alpha)=\Omega_{\R}(1,\alpha)^{\sigma_{\lambda}}$.
\end{rem}

In the following, we exploit the module structure of
$\Omega_{\R}(\lambda,\alpha)$  and provide a sufficient and
necessary condition for simplicity of
$\Omega_{\R}(\lambda,\alpha)$.
\begin{thm}\label{thm-1}
Let $\R$ be the  Ramond agebra, $\lambda\in\C^*$ and $\alpha\in\C$. Let
$$\Xi:=x^2(\Omega_{\R}(\lambda,0)_{\bar{0}})\oplus \Omega_{\R}(\lambda,0)_{\bar{1}}.$$
Then the following statements hold.
\begin{itemize}
\item[\rm(1)]
 $\Omega_{\R}(\lambda,\alpha)$ is simple if and only if $\alpha\neq 0$.
\item[\rm(2)]
 $\Omega_{\R}(\lambda,0)$ has a unique proper submodule $\Xi$, and $\Omega_{\R}(\lambda,0)/\Xi$ is a $1$-dimensional trivial $\R$-module.
\item[\rm(3)]
$\Xi\cong\Pi(\Omega_{\R}(\lambda,\frac12))$, so that $\Xi$ is an irreducible $\R$-module.
\end{itemize}
\end{thm}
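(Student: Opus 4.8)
The plan is to read off everything from the Witt-algebra modules $\Omega(\lambda,\alpha)$ by way of Lemma~\ref{Witt-module}. The first step is to record the two structural facts that drive the argument. Putting $m=0$ in \eqref{module1}--\eqref{module4} shows that $L_0$ acts on $V=\C[x]$ as multiplication by $x^2$ and $G_0$ as multiplication by $x$; in particular $G_0$ is injective. Next, stripping the formal factor $x$ identifies the even part $V_{\bar{0}}=\C[x^2]$ (via $f(x^2)\leftrightarrow f(t)$, $t=x^2$) with the $\W$-module $\Omega(\lambda,\alpha)$ by \eqref{module1}, and the odd part $V_{\bar{1}}=x\C[x^2]$ with $\Omega(\lambda,\alpha+\tfrac12)$ by \eqref{module2}, since $m\alpha+\tfrac m2=m(\alpha+\tfrac12)$. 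Thus $V_{\bar{0}}$ and $V_{\bar{1}}$ are $\W$-submodules whose isomorphism types and submodule lattices are given by Lemma~\ref{Witt-module}, while the odd operators $G_m$ interchange them.

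For the sufficiency in (1), assume $\alpha\neq0$ and let $W\neq0$ be a submodule. Since the projection $\pi_{\bar{0}}\colon V\to V_{\bar{0}}$ is a $\W$-map, $\pi_{\bar{0}}(W)$ is a $\W$-submodule of the simple module $V_{\bar{0}}\cong\Omega(\lambda,\alpha)$; it is nonzero, for if it were zero then $W\subseteq V_{\bar{1}}$ and $G_0W\subseteq W$ would supply a nonzero even vector by injectivity of $G_0$. Hence $\pi_{\bar{0}}(W)=V_{\bar{0}}$. One then shows $W\cap V_{\bar{1}}\neq0$: otherwise $\pi_{\bar{0}}|_W$ is an isomorphism with $\W$-equivariant inverse $w_{\bar{0}}\mapsto w_{\bar{0}}+\psi(w_{\bar{0}})$, and $G_0$-invariance of $W$ forces $\psi G_0\psi=G_0$; but $\psi\colon\Omega(\lambda,\alpha)\to\Omega(\lambda,\alpha+\tfrac12)$ is zero by Schur's lemma (the two are simple and non-isomorphic; when $\alpha=-\tfrac12$ one instead notes that $\Omega(\lambda,0)$ has no submodule isomorphic to $\Omega(\lambda,-\tfrac12)$, its only proper submodule being $\cong\Omega(\lambda,1)$), giving $G_0=0$, a contradiction. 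With $W\cap V_{\bar{1}}\neq0$, applying the injective $G_0$ yields $0\neq G_0(W\cap V_{\bar{1}})\subseteq W\cap V_{\bar{0}}$, so $W\cap V_{\bar{0}}=V_{\bar{0}}$ by simplicity, and then $G_0V_{\bar{0}}=V_{\bar{1}}\subseteq W$; hence $W=V$. For necessity, set $\alpha=0$ and check directly from \eqref{module1}--\eqref{module4} that $\Xi=x^2\C[x^2]\oplus x\C[x^2]$ is stable under every $L_m,G_m$, so it is a proper nonzero submodule and $\Omega_{\R}(\lambda,0)$ is not simple.

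For (3) the idea is to exhibit $\Xi$ as a parity shift of a simple module already covered by (1). Define an even linear bijection $\phi\colon\Pi(\Omega_{\R}(\lambda,\tfrac12))\to\Xi$ by
\[
\phi\big(p(y^2)\big)=x\,p(x^2),\qquad \phi\big(y\,p(y^2)\big)=x^2\,p(x^2),
\]
carrying the (shifted) odd part $\C[y^2]$ onto $\Xi_{\bar{1}}=x\C[x^2]$ and the even part $y\C[y^2]$ onto $\Xi_{\bar{0}}=x^2\C[x^2]$. Comparing \eqref{module1}--\eqref{module4} at $\alpha=\tfrac12$ with the action on $\Xi$ restricted from $\alpha=0$ shows that $\phi$ intertwines every $L_m$ and $G_m$; for instance $G_m\big(x^2p(x^2)\big)=\lambda^m x(x^2+m)p(x^2+m)=\phi\big(G_m(y\,p(y^2))\big)$, and the remaining three cases are identical in spirit. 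Hence $\Xi\cong\Pi(\Omega_{\R}(\lambda,\tfrac12))$. Since $\tfrac12\neq0$, part (1) gives that $\Omega_{\R}(\lambda,\tfrac12)$ is simple, and as the parity-change functor $\Pi$ preserves simplicity, $\Xi$ is irreducible.

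Finally (2). The decomposition $V=\C\cdot1\oplus\Xi$ shows $\Omega_{\R}(\lambda,0)/\Xi$ is one-dimensional, and since $L_m\cdot1=\lambda^m x^2\in\Xi$ and $G_m\cdot1=\lambda^m x\in\Xi$, the quotient is the trivial module. For uniqueness, let $W$ be any proper nonzero submodule. If $W\subseteq\Xi$, then $W=\Xi$ by the irreducibility proved in (3). If $W\not\subseteq\Xi$, choose $w=1+\xi\in W$ with $\xi\in\Xi$; then $L_0w=x^2+x^2\xi$ is a nonzero element of the submodule $W\cap\Xi$, forcing $W\cap\Xi=\Xi$, whence $1=w-\xi\in W$ and $W=V$, contradicting properness. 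Thus $\Xi$ is the unique proper submodule. The main obstacle is the step isolating $W\cap V_{\bar{1}}\neq0$ in (1): this is where non-graded submodules and the degenerate value $\alpha=-\tfrac12$ (at which $V_{\bar{1}}$ fails to be simple) must be controlled, and it is handled by the Schur-type vanishing of $\psi$ together with the precise submodule structure of $\Omega(\lambda,0)$ from Lemma~\ref{Witt-module}.
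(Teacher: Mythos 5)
Your proof is correct, but at the decisive step it runs along a genuinely different (and stronger) track than the paper's. The paper, following its stated convention that submodules of supermodules are $\Z_2$-graded, takes a nonzero submodule $M=M_{\bar 0}\oplus M_{\bar 1}$, gets $M_{\bar 0}\neq 0$ by hitting $M_{\bar 1}$ with $G_0$ (via \eqref{module4}), and notes via \eqref{module3} that $M_{\bar 0}=\Omega_{\R}(\lambda,\alpha)_{\bar 0}$ forces $M_{\bar 1}=\Omega_{\R}(\lambda,\alpha)_{\bar 1}$; thus simplicity reduces at once to simplicity of the even part as a $\W$-module, i.e.\ to Lemma \ref{Witt-module}(2), and for $\alpha=0$ the same lemma pins down $M_{\bar 0}=x^2\C[x^2]$, hence $M=\Xi$. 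You instead allow arbitrary, possibly non-graded, submodules $W$; this is exactly why you need the extra machinery the paper never uses: the identification $V_{\bar 1}\cong\Omega(\lambda,\alpha+\frac12)$, the projection argument, and the Schur-type vanishing of $\psi$, with the degenerate value $\alpha=-\frac12$ (where $V_{\bar 1}\cong\Omega(\lambda,0)$ is not simple) treated separately through the submodule structure of $\Omega(\lambda,0)$. What this buys is a strictly stronger conclusion — simplicity as an ungraded module, which implies the graded statement — at the cost of invoking the isomorphism classification $\Omega(\lambda,\alpha)\cong\Omega(\mu,\beta)\Leftrightarrow\lambda=\mu,\ \alpha=\beta$, which is \emph{not} part of Lemma \ref{Witt-module} as stated in the paper; it is in \cite{TZ} and the paper itself uses it tacitly in the proof of Theorem \ref{thm-2}, but you should cite it explicitly (or note the elementary alternative: $L_0$-equivariance forces such a map to be multiplication by a polynomial, and $L_1$-equivariance then kills it). Your part (3) is the paper's isomorphism $\varphi$ written in the inverse direction, and your part (2) also differs mildly: the paper obtains uniqueness by applying Lemma \ref{Witt-module}(2) to $M_{\bar 0}$ of a graded submodule, whereas you deduce it from the irreducibility of $\Xi$ proved in (3), again covering non-graded $W$. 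Both routes are sound; the paper's is shorter precisely because the graded-submodule convention does the work that your Schur argument has to do by hand.
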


\begin{proof}
Let $M=M_{\bar 0}\oplus M_{\bar 1}$ be a nonzero submodule of
$\Omega(\lambda,\alpha)$. It follows from (\ref{module4}) that
$M_{\bar 0}\neq 0$. Moreover, if $M_{\bar
0}=\Omega(\lambda,\alpha)_{\bar 0}$, we then obtain that $M_{\bar
1}=\Omega(\lambda,\alpha)_{\bar 1}$ by  (\ref{module3}).
Consequently, $M=\Omega(\lambda,\alpha)$. This implies that
$\Omega(\lambda,\alpha)$ is a simple $\R$-module provided that
$\Omega(\lambda,\alpha)_{\bar 0}$ is a simple $\R_{\bar
0}$-module.

Case (i): $\alpha=0$.

In this case, it is readily shown that $\Xi$ is a proper
submodule. Furthermore, suppose $M=M_{\bar 0}\oplus M_{\bar 1}$ is
an arbitrary nonzero proper submodule of $\Omega(\lambda,\alpha)$,
then $M_{\bar 0}=x^2\Omega(\lambda,\alpha)_{\bar 0}$ by Lemma
\ref{Witt-module} (2). Furthermore, it follows from
(\ref{module3}) that $M_{\bar 1}=\Omega(\lambda,\alpha)_{\bar 1}$.
Consequently, $M=\Xi$, and $\Omega_{\R}(\lambda,0)/\Xi$ is a
$1$-dimensional trivial $\R$-module.

Case (ii): $\alpha\neq 0$.

In this case, since  $\Omega(\lambda,\alpha)_{\bar 0}$ is a simple $\R_{\bar 0}$-module by Lemma \ref{Witt-module} (2), it follows that  $\Omega(\lambda,\alpha)$ is a simple $\R$-module.

For the part (3), we define a linear map $\varphi$ from $\Xi $ to $\Pi(\Omega_{\R}(\lambda,\frac12))$ such that
$$\varphi(x^2f(x^2))=xf(x^2), \ \ \ \ \varphi(xf(x^2))=f(x^2) $$
for $f(x^2)\in\C[x^2]$. From (\ref{module1}) with $\alpha=0$ and (\ref{module2}) with $\alpha=\frac12$, we have
\begin{eqnarray*}
    \varphi(L_m x^2f(x^2))\!\!\!&=\!\!\!&\lambda^m\varphi(x^2(x^2+m)f(x^2+m))\\
    \!\!\!&=\!\!\!&\lambda^mx(x^2+m)f(x^2+m)\\
    \!\!\!&=\!\!\!&L_m xf(x^2)\\
    \!\!\!&=\!\!\!&L_m\varphi(x^2f(x^2)).
\end{eqnarray*}
Similar arguments yield that
\begin{eqnarray*}
    &&\varphi(G_m x^2f(x^2))=G_m\varphi(x^2f(x^2)),\\
    &&\varphi(L_m xf(x^2))=L_m\varphi(xf(x^2)),\\
    &&\varphi(G_m xf(x^2))=G_m\varphi(xf(x^2)).
\end{eqnarray*}
Thus $\varphi$ is an $\R$-module isomorphism, and $\Xi$ is irreducible.

We complete the proof.
\end{proof}

\begin{thm}\label{thm-2}
Let $\lambda,\mu\in\C^*, \alpha,\beta\in\C$. Then $\Omega_{\R}(\lambda,\alpha)\cong\Omega_{\R}(\mu,\beta)$ as $\R$-modules if and only if $\lambda=\mu$ and $\alpha=\beta$.
\end{thm}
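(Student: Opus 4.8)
The \emph{if} part is immediate: for $\lambda=\mu$ and $\alpha=\beta$ the two modules coincide by the defining formulas (\ref{module1})--(\ref{module4}), so the identity map is an isomorphism. The content is the \emph{only if} part, and the plan is to exploit that $\Omega_{\R}(\lambda,\alpha)$ is free of rank $1$ over $U(\hh)$ with canonical generator the constant $1\in\C[x^2]$. First I would record the key computation $G_0^{\,n}\cdot 1=x^{n}$ for all $n\ge 0$: setting $m=0$ in (\ref{module3}) and (\ref{module4}) gives $G_0 f(x^2)=xf(x^2)$ and $G_0\,xf(x^2)=x^2f(x^2)$, so $G_0$ raises the $x$-degree by one and $\{G_0^{\,n}\cdot 1\}_{n\ge 0}=\{x^{n}\}_{n\ge 0}$ is a basis of $V=\C[x]$. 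Since $[L_0,G_0]=0$ and $G_0^{2}=\tfrac12[G_0,G_0]=L_0$, the enveloping algebra $U(\hh)$ is the commutative polynomial algebra $\C[G_0]$ (with $L_0=G_0^{2}$), whose group of units is $\C^{*}$, and $V=U(\hh)\cdot 1$ is free of rank $1$.

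Now let $\phi\colon\Omega_{\R}(\lambda,\alpha)\to\Omega_{\R}(\mu,\beta)$ be an $\R$-module isomorphism; in particular $\phi$ is $U(\hh)$-linear. It therefore carries the free generator $1$ of the source to a free generator of the target. Since the generators of a free rank-$1$ module over $\C[G_0]$ are exactly the unit multiples $c\cdot 1$ with $c\in\C^{*}$, we obtain $\phi(1)=c$ for some $c\in\C^{*}$ (note that this forces $\phi$ to preserve parity, rather than requiring it as a hypothesis). Applying $\phi$ to $G_0^{\,n}\cdot 1=x^{n}$ and using $U(\hh)$-linearity then gives $\phi(x^{n})=G_0^{\,n}\phi(1)=c\,x^{n}$, so that $\phi$ is simply multiplication by the scalar $c$ on the common underlying space $\C[x]$.

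Finally I would pin down the parameters by evaluating a single non-Cartan operator on the generator. By (\ref{module1}) with $m=1$ and $f=1$, in the source $L_1\cdot 1=\lambda(x^2+\alpha)$, while in the target $L_1\cdot 1=\mu(x^2+\beta)$. Comparing $\phi(L_1\cdot 1)=c\lambda(x^2+\alpha)$ with $L_1\phi(1)=c\mu(x^2+\beta)$ and cancelling $c\neq 0$ yields the polynomial identity $\lambda(x^2+\alpha)=\mu(x^2+\beta)$; matching the coefficient of $x^2$ gives $\lambda=\mu$, and then matching the constant term gives $\alpha=\beta$. The only genuine obstacle is the step identifying $\phi(1)$ up to a scalar, which I expect to be the crux: it rests on the rank-$1$ freeness over $U(\hh)$ together with $U(\hh)\cong\C[G_0]$ having unit group $\C^{*}$ (alternatively one may restrict $\phi$ to the even parts, which are free of rank $1$ over $\C[L_0]$ with $L_0$ acting as multiplication by $x^2$, and invoke Lemma \ref{Witt-module}). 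Everything after that is a routine comparison of coefficients.
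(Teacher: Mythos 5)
Your proof is correct, but it follows a genuinely different route from the paper's. The paper's own proof is a two-line reduction: an isomorphism $\Omega_{\R}(\lambda,\alpha)\cong\Omega_{\R}(\mu,\beta)$ restricts to an isomorphism of the even parts as modules over $\R_{\bar 0}\cong\W$, and these even parts are precisely the Tan--Zhao modules $\Omega(\lambda,\alpha)$ and $\Omega(\mu,\beta)$, whose isomorphism classification ($\lambda=\mu$, $\alpha=\beta$) is then quoted from \cite{TZ}. Note that this last step leans on a fact that is proved in \cite{TZ} but is \emph{not} among the three statements of Lemma \ref{Witt-module} as recorded in the paper, so the paper's ``It follows that'' is really an external citation. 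Your argument avoids that dependence entirely: you work with the whole supermodule, observe that $U(\hh)=\C[G_0]$ with $G_0$ acting as multiplication by $x$, conclude that any isomorphism $\phi$ is $U(\hh)$-linear and must send the free generator $1$ to a free generator, i.e.\ to a unit $c\in\C^{*}$, hence that $\phi$ is the scalar $c$ on $\C[x]$; comparing $\phi(L_1\cdot 1)$ with $L_1\phi(1)$ then forces $\lambda=\mu$ and $\alpha=\beta$. What your route buys is self-containedness (only the defining formulas (\ref{module1})--(\ref{module4}) are used), the sharper conclusion that every isomorphism between these modules is a scalar multiple of the identity, and the observation that parity-preservation is a consequence rather than a hypothesis. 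What the paper's route buys is brevity, given the background it is willing to cite. Both hinge on the same structural fact---rank-one freeness over a polynomial Cartan forces rigidity of morphisms---but your proof executes it directly where the paper delegates it.
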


\begin{proof} The sufficiency is obvious.
Suppose  $\Omega_{\R}(\lambda,\alpha)\cong\Omega_{\R}(\mu,\beta)$ as $\R$-modules. Then $\Omega_{\R}(\lambda,\alpha)_{\bar 0}\cong\Omega_{\R}(\mu,\beta)_{\bar 0}$ as $\R_{\bar 0}$-module. It follows that $\lambda=\mu$ and $\alpha=\beta$.

\end{proof}

Let $\V =\C [x]\oplus \C [y]$. Then $\V$  is a $\Z_2$-graded vector space with $\V_{\bar{0}}=\C [x]$ and $\V_{\bar{1}}=\C [y]$. The following result gives a precise construction of an $\NS$-module structure on $\V$.

\begin{prop}\adddot
    \label{prop2} For $\lambda\in\C^*,\alpha\in\C,  f(x)\in \C [x]$ and $g(y)\in \C [y]$, we define the action of $\NS$ on $\V$ as follows
    \begin{eqnarray}
    &&L_m f(x)=\lambda^m(x+m\alpha)f(x+m),\label{ns-module1}\\
    &&L_m g(y)=\lambda^m(y+m(\alpha+\frac{1}{2}))g(y+m),\label{ns-module2}\\
    &&G_r f(x)=\lambda^{r-\frac{1}{2}}f(y+r) ,\label{ns-module3}\\
    &&G_r g(y)=\lambda^{r+\frac{1}{2}}(x+2r\alpha)g(x+r) ,\label{ns-module4}
    \end{eqnarray}
where $m\in\Z, r\in\frac{1}{2}+\Z$. Then $\V$ is an $\NS$-module under the action of (\ref{ns-module1})-(\ref{ns-module4}), which is denoted by $\Omega_{\NS}(\lambda,\alpha)$.
\end{prop}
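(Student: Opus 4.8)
The plan is to verify directly that the three families of defining relations of $\NS$ are respected by the operators defined in (\ref{ns-module1})--(\ref{ns-module4}), checking each relation on the two homogeneous components $\V_{\bar 0}=\C[x]$ and $\V_{\bar 1}=\C[y]$ separately. The structure of the argument is entirely parallel to that of Proposition \ref{prop1}; the only genuinely new feature is that the odd generators now carry half-integer modes $r\in\frac12+\Z$, so that $G_r$ interchanges the two copies $\C[x]$ and $\C[y]$ rather than swapping the even and odd parts of a single polynomial ring.

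First I would dispose of the even relation $[L_m,L_n]=(m-n)L_{m+n}$. On $\C[x]$ the action (\ref{ns-module1}) is literally the Witt action (\ref{Virasoro-module}) defining $\Omega(\lambda,\alpha)$, while on $\C[y]$ the action (\ref{ns-module2}) is the same formula with the parameter $\alpha$ replaced by $\alpha+\frac12$ and the variable renamed, i.e. the module $\Omega(\lambda,\alpha+\frac12)$. Hence both components are $\W$-modules by Lemma \ref{Witt-module}(1), and the $[L,L]$-relation holds with no further computation.

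Next I would treat the mixed relation $[L_m,G_r]=(\frac12 m-r)G_{m+r}$, an ordinary commutator since $L_m$ is even. Applied to $f(x)\in\C[x]$, both compositions $L_mG_r$ and $G_rL_m$ land in $\C[y]$ as $\lambda^{m+r-\frac12}f(y+m+r)$ times an affine factor, and subtracting the two affine factors collapses $m(\alpha+\frac12)-(r+m\alpha)$ to exactly $\frac m2-r$, matching $(\frac12 m-r)G_{m+r}f(x)$. The analogous check on $\C[y]$ is the one slightly heavier computation: there the weights are quadratic in $x$, and one expands $(x+m\alpha)(x+m+2r\alpha)-(x+2r\alpha)(x+r+m\alpha+\frac m2)$ and verifies it equals $(\frac m2-r)(x+2(m+r)\alpha)$, the coefficient occurring in $G_{m+r}g(y)$. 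Finally, the odd relation $[G_r,G_s]=G_rG_s+G_sG_r=2L_{r+s}$ is an anticommutator, and since $r+s\in\Z$ its right-hand side is a genuine $L$-operator; on $\C[x]$ one finds $G_rG_sf(x)=\lambda^{r+s}(x+2r\alpha)f(x+r+s)$, symmetrically for $G_sG_r$, so the sum is $2\lambda^{r+s}(x+(r+s)\alpha)f(x+r+s)=2L_{r+s}f(x)$, with the $\C[y]$ computation producing the weight $y+(r+s)(\alpha+\frac12)$ in agreement with (\ref{ns-module2}).

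I expect the only real difficulty to be bookkeeping rather than anything conceptual. Two points must be watched: that $[G_r,G_s]$ is an anticommutator and not a commutator, so the two orders add rather than subtract; and that the half-integer powers of $\lambda$ in (\ref{ns-module3})--(\ref{ns-module4}) combine to the correct integer powers -- for instance $\lambda^{s-\frac12}$ from $G_s$ followed by $\lambda^{r+\frac12}$ from $G_r$ multiply to the $\lambda^{r+s}$ demanded by $L_{r+s}$, and $\lambda^{r-\frac12}$ composed with $\lambda^m$ matches the $\lambda^{(m+r)-\frac12}$ attached to $G_{m+r}$. Once these exponents and the affine shifts are tracked consistently, all six verifications close exactly as in Proposition \ref{prop1}.
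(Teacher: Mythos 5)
Your proposal is correct and follows essentially the same route as the paper's proof: the $[L_m,L_n]$ relations are disposed of via Lemma \ref{Witt-module}(1) on each component (the paper does exactly this), and the mixed relation $[L_m,G_r]$ and the anticommutator $[G_r,G_s]$ are verified by direct computation on $\C[x]$ and $\C[y]$ separately, with your stated key identities (the collapse $m(\alpha+\tfrac12)-(r+m\alpha)=\tfrac m2-r$, the quadratic expansion equaling $(\tfrac m2-r)(x+2(m+r)\alpha)$, and the $\lambda$-exponent bookkeeping) all matching the paper's calculations exactly.
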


\begin{proof}
It follows from Lemma \ref{Witt-module} (1) that
\begin{eqnarray*}
&&[L_m, L_n] f(x)=L_mL_nf(x)-L_nL_mf(x),\label{check-ns1}\\
&&[L_m, L_n]
g(y)=L_mL_ng(y)-L_nL_mg(y),
\,\,\forall\,m,n\in\Z.\label{check-ns2}
\end{eqnarray*}
Moreover, due to (\ref{ns-module1}), (\ref{ns-module2}) and
(\ref{ns-module3}), we obtain
\begin{eqnarray*}
    &&L_mG_rf(x)-G_rL_mf(x)\\
    \!\!\!&=\!\!\!&\lambda^{m+r-\frac{1}{2}}(y+m(\alpha+\frac{1}{2}))f(y+m+r)-\lambda^{m+r-\frac{1}{2}}(y+r+m\alpha)f(y+m+r)\\
    \!\!\!&=\!\!\!&(\frac{m}{2}-r)\lambda^{m+r-\frac{1}{2}}f(y+m+r)\\
    \!\!\!&=\!\!\!&(\frac{m}{2}-r)G_{m+r} f(x)\\
    \!\!\!&=\!\!\!&[L_m,G_r] f(x).
\end{eqnarray*}
Similarly, Using (\ref{ns-module1}), (\ref{ns-module2}) and
(\ref{ns-module4}), we know that
\begin{eqnarray*}
    &&L_mG_rg(y)-G_rL_mg(y)\\
    \!\!\!&=\!\!\!&\lambda^{m+r+\frac{1}{2}}(x+m\alpha)(x+m+2r\alpha)g(x+m+r)\\
    &&-\lambda^{m+r+\frac{1}{2}}(x+2r\alpha)(x+r+m(\alpha+\frac{1}{2}))g(x+m+r)\\
    \!\!\!&=\!\!\!&(\frac{m}{2}-r)\lambda^{m+r+\frac{1}{2}}(x+2(m+r)\alpha)g(x+m+r)\\
    \!\!\!&=\!\!\!&(\frac{m}{2}-r)G_{m+r} g(y)\\
    \!\!\!&=\!\!\!&[L_m,G_r] g(y).
\end{eqnarray*}
Furthermore, from (\ref{ns-module1}), (\ref{ns-module3}) and
(\ref{ns-module4}), we have
\begin{eqnarray*}
    &&G_rG_sf(x)+G_sG_rf(x)\\
    \!\!\!&=\!\!\!&\lambda^{r+s}(x+2r\alpha)f(x+r+s)+\lambda^{r+s}(x+2s\alpha)f(x+r+s)\\
    \!\!\!&=\!\!\!&2\lambda^{r+s}(x+(r+s)\alpha)f(x+r+s)\\
    \!\!\!&=\!\!\!&2L_{r+s} f(x)\\
    \!\!\!&=\!\!\!&[G_r,G_s] f(x).
\end{eqnarray*}
Similarly, by (\ref{ns-module2}), (\ref{ns-module3}) and
(\ref{ns-module4}), we get
\begin{eqnarray*}
    &&G_rG_sg(y)+G_sG_rg(y)\\
    \!\!\!&=\!\!\!&\lambda^{r+s}(y+r+2s\alpha)g(y+r+s)+\lambda^{r+s}(y+s+2r\alpha)g(y+r+s)\\
    \!\!\!&=\!\!\!&2\lambda^{r+s}(y+(r+s)(\alpha+\frac{1}{2}))g(y+r+s)\\
    \!\!\!&=\!\!\!&2L_{r+s} g(y)\\
    \!\!\!&=\!\!\!&[G_r,G_s] g(y).
\end{eqnarray*}
Now we proved that $\V$ is an $\NS$-module.
\end{proof}

\begin{rem}
Let $\lambda\in\C^*, \alpha\in\mathbb{C}$, then $\Omega_{\NS}(\lambda,\alpha)=\Omega_{\NS}(1,\alpha)^{\sigma_{\lambda}}$.
\end{rem}

Let $\sigma:\NS \rightarrow\R$ be a linear map defined by
\begin{eqnarray*}
L_m \!\!\!&\mapsto\!\!\!&\frac{1}{2} L_{2m},\nonumber\\
G_{r}\!\!\!&\mapsto\!\!\!&\frac{1}{\sqrt{2}} G_{2r}
\end{eqnarray*}
for $m\in\Z, r\in\frac12+\Z$. It is straightforward to verify that $\sigma$ is an injective Lie superalgebra homomorphism. Then $\NS$ can be regarded as a subalgebra of $\R$. In particular, $\Omega_{\R}(\lambda,\alpha)$ is an $\NS$-module.

\begin{prop}\label{Connection of modules over R and NS}
$\Omega_{\R}(\lambda,\alpha)$ is a simple $\NS$-module if and only if it is a simple $\R$-module.
\end{prop}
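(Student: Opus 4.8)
The plan is to split the biconditional into its two implications and to use the simplicity criterion already established in Theorem \ref{thm-1} (that $\Omega_{\R}(\lambda,\alpha)$ is $\R$-simple exactly when $\alpha\neq 0$), thereby reducing the statement to showing that $\Omega_{\R}(\lambda,\alpha)$ is $\NS$-simple precisely when $\alpha\neq 0$. Throughout I read the $\NS$-action on $\Omega_{\R}(\lambda,\alpha)$ as the restriction of the $\R$-action along $\sigma$, so that $L_m^{\NS}$ acts as $\frac12 L_{2m}$ and $G_r^{\NS}$ acts as $\frac{1}{\sqrt 2}G_{2r}$ (with $2r$ an odd integer).

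One implication is purely formal. Because $\NS$ is identified with a subalgebra of $\R$, every graded $\R$-submodule of $\Omega_{\R}(\lambda,\alpha)$ is invariant under $\NS$ and hence is a graded $\NS$-submodule. Thus a nonzero proper $\R$-submodule is simultaneously a nonzero proper $\NS$-submodule, and contrapositively $\NS$-simplicity forces $\R$-simplicity. Concretely, when $\alpha=0$ the submodule $\Xi$ of Theorem \ref{thm-1}(2) witnesses non-simplicity on both sides, which is consistent.

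For the converse I would assume $\alpha\neq 0$ and prove $\NS$-simplicity directly. Let $M=M_{\bar 0}\oplus M_{\bar 1}$ be a nonzero graded $\NS$-submodule. First, $M_{\bar 0}\neq 0$: if $0\neq xf(x^2)\in M_{\bar 1}$, then by (\ref{module4}) the element $G_r^{\NS}\cdot xf(x^2)=\frac{1}{\sqrt 2}\lambda^{2r}(x^2+4r\alpha)f(x^2+2r)$ lies in $M_{\bar 0}$ and is nonzero, since $\alpha\neq 0$ makes $x^2+4r\alpha$ a nonzero polynomial. The crucial observation is then that $V_{\bar 0}=\C[x^2]$, viewed as a module over the even subalgebra $\langle\,\frac12 L_{2m}\mid m\in\Z\,\rangle$, is isomorphic as a $\W$-module to $\Omega(\lambda^2,\alpha)$: by (\ref{module1}), after the substitution $w=x^2/2$ the operator $\frac12 L_{2m}$ acts as $(\lambda^2)^m(w+m\alpha)(\,\cdot\,)(w+m)$. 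Since $\lambda^2\in\C^*$ and $\alpha\neq 0$, Lemma \ref{Witt-module}(2) shows this $\W$-module is simple, so the nonzero submodule $M_{\bar 0}$ must be all of $V_{\bar 0}$.

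Finally, once $M_{\bar 0}=V_{\bar 0}$, I would recover the odd part: by (\ref{module3}) the map $G_r^{\NS}\colon V_{\bar 0}\to V_{\bar 1}$, $f(x^2)\mapsto \frac{1}{\sqrt 2}\lambda^{2r}xf(x^2+2r)$, is surjective for any fixed $r$, because $f\mapsto f(x^2+2r)$ is a bijection of $\C[x^2]$; hence $M_{\bar 1}\supseteq G_r^{\NS}\cdot V_{\bar 0}=V_{\bar 1}$ and $M=\Omega_{\R}(\lambda,\alpha)$. This establishes $\NS$-simplicity for $\alpha\neq 0$ and completes the equivalence. The only genuinely non-formal step—and the place where the argument could break—is the claim that the index-$2$ Virasoro subalgebra $\langle L_{2m}\rangle$ already acts simply on $\C[x^2]$; this is exactly what the reduction to $\Omega(\lambda^2,\alpha)$ and Lemma \ref{Witt-module}(2) provide, and it is important that passing to the even subalgebra only rescales the spectral parameter as $\lambda\mapsto\lambda^2$ while leaving $\alpha$ unchanged, so the simplicity threshold $\alpha\neq 0$ is preserved.
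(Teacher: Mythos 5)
Your proof is correct and is essentially the paper's own argument: both reduce the statement (via Theorem \ref{thm-1} and the trivial fact that $\R$-submodules are $\NS$-submodules) to showing $\NS$-simplicity when $\alpha\neq 0$, both do so by recognizing $\Omega_{\R}(\lambda,\alpha)_{\bar 0}$ under $\sigma(\W)=\langle \frac12 L_{2m}\rangle$ as a simple Witt module of type $\Omega(\lambda^2,\alpha)$, and both then use the $\sigma(G_r)$-action to pass between the even and odd parts. Your write-up merely makes explicit what the paper leaves implicit (the substitution $w=x^2/2$ and the surjectivity of $\sigma(G_r)$ on the even part), and your formula $\sigma(G_r)\,xf(x^2)=\frac{1}{\sqrt2}\lambda^{2r}(x^2+4r\alpha)f(x^2+2r)$ in fact corrects a harmless typo in the paper ($x^2+2r\alpha$); note only that $x^2+4r\alpha$ is a nonzero polynomial for every $\alpha$, so your parenthetical appeal to $\alpha\neq0$ at that step is unnecessary.
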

\begin{proof}
It is sufficient to prove that $\Omega_{\R}(\lambda,\alpha)$ is a simple $\NS$-module whenever $\alpha\not=0$.
Indeed, since $\sigma(L_m)f(x^2)=\frac12\lambda^{2m}(x^2+2m\alpha)f(x^2+2m)$ for all $m\in\Z$, it follows that $\Omega_{\R}(\lambda,\alpha)_{\bar0}$ is a simple $\sigma(\W)$-module. Moreover, since
$$\sigma(G_{r})xf(x^2)=\frac{1}{\sqrt{2}}\lambda^{2r}(x^2+2r\alpha)f(x^2+2r)$$
and
$$\sigma(G_{r})f(x^2)=\frac{1}{\sqrt{2}}\lambda^{2r}xf(x^2+2r),$$
for all $r\in\frac12+\Z$, it further yields that $\Omega_{\R}(\lambda,\alpha)$ is a simple $\NS$-module.
\end{proof}

The following result gives an intimate connection between the two classes of modules constructed in Proposition \ref{prop1} and Proposition \ref{prop2}.
\begin{prop}\label{an iso}
Let $\lambda\in\C^*,\alpha\in\C$. Then as $\NS$-modules, $\Omega_{\NS}(\lambda,\alpha)\cong\Omega_{\R}(\sqrt{\lambda},\alpha)$.
\end{prop}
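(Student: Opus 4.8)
The plan is to exhibit an explicit even linear bijection $\phi:\Omega_{\NS}(\lambda,\alpha)\to\Omega_{\R}(\sqrt\lambda,\alpha)$ and to check directly that it intertwines the $\NS$-action. As a first step I would make the $\NS$-module structure on $\Omega_{\R}(\sqrt\lambda,\alpha)$ fully explicit by pushing the formulas (\ref{module1})--(\ref{module4}) through the embedding $\sigma$, writing $\mu:=\sqrt\lambda$ so that $\mu^{2m}=\lambda^m$ and $\mu^{2r}=\lambda^r$, and using $\sigma(L_m)=\tfrac12L_{2m}$ and $\sigma(G_r)=\tfrac1{\sqrt2}G_{2r}$. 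On the even part $\C[x^2]$ this gives
\[
L_m h(x^2)=\tfrac12\lambda^m(x^2+2m\alpha)h(x^2+2m),\qquad
G_r h(x^2)=\tfrac1{\sqrt2}\lambda^r x\,h(x^2+2r),
\]
together with the analogous formulas on the odd part $x\C[x^2]$. The key structural feature to record is that $\sigma$ converts the shift $x^2\mapsto x^2+n$ into $x^2\mapsto x^2+2n$.

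This observation suggests the change of variable $x\mapsto x^2/2$. Concretely I would define $\phi$ to be parity-preserving by
\[
\phi(f(x))=f\!\left(\tfrac{x^2}{2}\right)\ \ (f\in\C[x]),\qquad
\phi(g(y))=\tfrac{\sqrt\lambda}{\sqrt2}\,x\,g\!\left(\tfrac{x^2}{2}\right)\ \ (g\in\C[y]).
\]
Each component is a $\C$-linear bijection, namely $\C[x]\to\C[x^2]$ and $\C[y]\to x\C[x^2]$, so $\phi$ is an isomorphism of the underlying $\Z_2$-graded spaces. The substitution $x\mapsto x^2/2$ is precisely what turns the shift-by-$m$ in $\Omega_{\NS}(\lambda,\alpha)$ into the shift-by-$2m$ appearing above, while the scalar $\tfrac{\sqrt\lambda}{\sqrt2}$ on the odd component is forced by the $G_r$-action and is the heart of the matching.

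It then remains to verify $\phi(u\cdot v)=u\cdot\phi(v)$ for $u\in\{L_m,G_r\}$ and $v$ in each of the two homogeneous parts, i.e. the four identities coming from (\ref{ns-module1})--(\ref{ns-module4}). After the substitution $x\mapsto x^2/2$ each one collapses to an elementary polynomial identity; for example the $L_m$ check on the even part reduces to $\lambda^m(\tfrac{x^2}2+m\alpha)=\tfrac12\lambda^m(x^2+2m\alpha)$. I expect the main obstacle to be pinning down the normalization: one must confirm that the single constant $\tfrac{\sqrt\lambda}{\sqrt2}$ simultaneously makes both odd intertwining relations hold, namely the map $G_r:\C[x]\to\C[y]$ of (\ref{ns-module3}) carrying the scalar $\lambda^{r-\frac12}$, and the map $G_r:\C[y]\to\C[x]$ of (\ref{ns-module4}) carrying $\lambda^{r+\frac12}$. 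Reconciling these two different half-integer powers of $\lambda$ against the single factor $\mu^{2r}=\lambda^r$ produced by $\sigma$ is what fixes the constant and must be checked to be consistent, with careful bookkeeping of the fixed branch $\mu=\sqrt\lambda$. Once these four identities are confirmed, $\phi$ is an $\NS$-module isomorphism and the proposition follows.
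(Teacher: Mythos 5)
Your proposal is correct and is essentially identical to the paper's own proof: the map you define, $\phi(f(x))=f(\tfrac{x^2}{2})$, $\phi(g(y))=\tfrac{\sqrt{\lambda}}{\sqrt{2}}\,x\,g(\tfrac{x^2}{2})$, is exactly the isomorphism $\Phi$ used there, and the verification proceeds the same way, by pushing the $\NS$-action through $\sigma(L_m)=\tfrac12 L_{2m}$, $\sigma(G_r)=\tfrac1{\sqrt2}G_{2r}$ and checking the four intertwining identities. The normalization issue you flag does resolve consistently, since both odd checks produce the same factor $\lambda^{r}\cdot\tfrac{\sqrt{\lambda}}{\sqrt{2}}$ (respectively $\tfrac{1}{\sqrt{2}}\lambda^{r}\cdot\sqrt{\lambda}$) once the substitution $x\mapsto\tfrac{x^2}{2}$ is made.
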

\begin{proof}
Let
\begin{eqnarray*}
\Phi:\,\Omega_{\NS}(\lambda,\alpha)& \longrightarrow &\Omega_{\R}(\sqrt{\lambda},\alpha)\\
f(x)&\longmapsto& f(\frac{1}{2}x^2)\\
g(y)&\longmapsto& \frac{\sqrt{\lambda}}{\sqrt{2}}x
g(\frac{1}{2}x^2)
\end{eqnarray*}
be the linear map  from $\Omega_{\NS}(\lambda,\alpha)$ to $\Omega_{\R}(\sqrt{\lambda},\alpha)$. It is obvious that $\Phi$ is bijective. In the following, we will show that it is an $\NS$-module isomorphism.

For any $m\in\mathbb{Z}, r\in\frac{1}{2}+\mathbb{Z}, f(x)\in\C[x],
g(y)\in\C[y]$, on the one hand,
\begin{eqnarray*}
&&\Phi(L_mf(x))=\Phi(\lambda^m(x+m\alpha)f(x+m))=\lambda^m(\frac{1}{2}x^2+m\alpha)f(\frac{1}{2}x^2+m),\\
&&\Phi(L_mg(y))=\Phi(\lambda^m(y+m(\alpha+\frac{1}{2}))g(y+m))=
\lambda^m\frac{\sqrt{\lambda}}{\sqrt{2}}x(\frac{1}{2}x^2+m(\alpha+\frac{1}{2}))g(\frac{1}{2}x^2+m),\\
&&\Phi(G_rf(x))=\Phi(\lambda^{r-\frac{1}{2}}f(y+r))=\frac{\lambda^{r}}{\sqrt{2}}xf(\frac{1}{2}x^2+r),\\
&&\Phi(G_rg(y))=\Phi(\lambda^{r+\frac{1}{2}}(x+2r\alpha)g(x+r))=\lambda^{r+\frac{1}{2}}(\frac{1}{2}x^2+2r\alpha)g(\frac{1}{2}x^2+r).
\end{eqnarray*}
While on the other hand,
\begin{eqnarray*}
&&\sigma(L_m)\Phi(f(x))=\frac{1}{2}L_{2m}f(\frac{1}{2}x^2)=\frac{1}{2}(\sqrt{\lambda})^{2m}(x^2+2m\alpha)f(\frac{1}{2}(x^2+2m)),\\
&&\sigma(L_m)\Phi(g(y))=\frac{1}{2}L_{2m}\frac{\sqrt{r}}{\sqrt{2}}xf(\frac{1}{2}x^2)=
\frac{\lambda^{m+\frac{1}{2}}}{2\sqrt{2}}x(x^2+2m\alpha+m)g(\frac{1}{2}(x^2+2m)),\\
&&\sigma(G_r)\Phi(f(x))=\frac{1}{\sqrt{2}}G_{2r}f(\frac{1}{2}x^2)=\frac{1}{\sqrt{2}}\lambda^rxf(\frac{1}{2}(x^2+2r)),\\
&&\sigma(G_r)\Phi(g(y))=\frac{1}{\sqrt{2}}G_{2r}\frac{\sqrt{\lambda}}{\sqrt{2}}xg(\frac{1}{2}x^2)
=\frac{\lambda^{r+\frac{1}{2}}}{2}(x^2+4r\alpha)g(\frac{1}{2}(x^2+2r)).
\end{eqnarray*}
Hence, we get
\begin{eqnarray*}
&&\Phi(L_mf(x))=\sigma(L_m)\Phi(f(x)),\,\,\,\,\,\,\Phi(L_mg(y))=\sigma(L_m)\Phi(g(y)),\\
&&\Phi(G_rf(x))=\sigma(G_r)\Phi(f(x)),\,\,\,\,\,\,\Phi(G_rg(y))=\sigma(G_r)\Phi(g(y)).
\end{eqnarray*}
This implies that $\Phi$  is an $\NS$-module isomorphism, as
desired.
\end{proof}

As a direct consequence of Proposition \ref{an iso}, Proposition \ref{Connection of modules over R and NS} and Theorem \ref{thm-1}, we have
\begin{coro}
Let $\NS$ be the  Neveu-Schwarz agebra, $\lambda\in\C^*$ and $\alpha\in\C$. Then $\Omega_{\NS}(\lambda,\alpha)$ is simple if and only if $\alpha\neq 0$.
\end{coro}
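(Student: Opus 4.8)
The plan is to read off the result by composing the three preceding results into a single chain of equivalences, thereby transferring the simplicity question for the Neveu-Schwarz module $\Omega_{\NS}(\lambda,\alpha)$ to the already-settled simplicity question for the Ramond module $\Omega_{\R}(\sqrt{\lambda},\alpha)$. Since the heavy lifting has all been done in Propositions \ref{an iso} and \ref{Connection of modules over R and NS} and in Theorem \ref{thm-1}, the corollary should reduce to a purely formal stringing-together of ``if and only if'' statements, with essentially no new computation.

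First I would fix $\mu=\sqrt{\lambda}$ for some choice of square root; note that $\mu\in\C^*$, and conversely every element of $\C^*$ arises this way, so nothing is lost in the parametrization. Proposition \ref{an iso} then provides an $\NS$-module isomorphism $\Omega_{\NS}(\lambda,\alpha)\cong\Omega_{\R}(\mu,\alpha)$. Because simplicity is an invariant of the module isomorphism class, $\Omega_{\NS}(\lambda,\alpha)$ is a simple $\NS$-module if and only if $\Omega_{\R}(\mu,\alpha)$ is a simple $\NS$-module. Next, Proposition \ref{Connection of modules over R and NS} upgrades the latter to the assertion that $\Omega_{\R}(\mu,\alpha)$ is a simple $\R$-module. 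Finally, Theorem \ref{thm-1}(1) tells us that this holds precisely when $\alpha\neq 0$. Concatenating these three equivalences gives exactly the claim.

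The genuine obstacles have already been cleared in the earlier results: the verification that the map $\Phi$ of Proposition \ref{an iso} intertwines the $\NS$-actions through the embedding $\sigma\colon\NS\hookrightarrow\R$, and the stability argument of Proposition \ref{Connection of modules over R and NS} showing that $\R$-simplicity and $\NS$-simplicity of $\Omega_{\R}(\mu,\alpha)$ coincide. Granting these, no further work remains. The single point I would state with care is that the reparametrization $\lambda\mapsto\sqrt{\lambda}$ alters only the first slot and leaves $\alpha$ untouched, so that the dividing condition $\alpha\neq 0$ is transported faithfully from the Ramond side to the Neveu-Schwarz side.
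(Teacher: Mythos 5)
Your proposal is correct and follows exactly the paper's route: the corollary is stated there as a direct consequence of Proposition \ref{an iso}, Proposition \ref{Connection of modules over R and NS}, and Theorem \ref{thm-1}, which is precisely the chain of equivalences you spell out. Your added remark that the reparametrization $\lambda\mapsto\sqrt{\lambda}$ leaves $\alpha$ untouched is a careful (and harmless) elaboration of what the paper leaves implicit.
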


As a paralleling result to Theorem \ref{thm-1}, we have the following proposition describing the $\NS$-module structure of  $\Omega_{\NS}(\lambda,0)$ for $\lambda\in\C^*$.
\begin{prop}
Let $\lambda\in\C^*$ and $\Gamma=x(\Omega_{\NS}(\lambda,0)_{\bar{0}})\oplus \Omega_{\NS}(\lambda,0)_{\bar 1}$.
Then the following statements hold.
\begin{itemize}
\item[(1)] $\Omega_{\NS}(\lambda,0)$ has a unique proper submodule $\Gamma$, and $\Omega_{\NS}(\lambda,0)/\Gamma$ is a $1$-dimensional trivial $\NS$-module.
\item[(2)] $\Gamma\cong\Pi(\Omega_{\NS}(\lambda,\frac{1}{2}))$, so that $\Gamma$ is an irreducible $\NS$-module.
\end{itemize}
\end{prop}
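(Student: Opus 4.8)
The plan is to follow the strategy of Theorem \ref{thm-1}: analyze the two graded components of $\Omega_{\NS}(\lambda,0)$ as modules over the even subalgebra $\NS_{\bar 0}\cong\W$ and exploit the rigidity of the Witt modules $\Omega(\lambda,\alpha)$ recorded in Lemma \ref{Witt-module}, rather than transporting the Ramond result through the isomorphism $\Phi$ of Proposition \ref{an iso}. Transport is not directly available for part (1), because $\Phi$ is only an $\NS$-module isomorphism whereas Theorem \ref{thm-1}(2) asserts uniqueness of a submodule over the larger algebra $\R$; passing to the subalgebra $\NS$ could a priori create extra submodules, so the uniqueness must be re-established intrinsically.

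For part (1), I would first note from (\ref{ns-module3}) and (\ref{ns-module4}) that for any nonzero graded submodule $M=M_{\bar 0}\oplus M_{\bar 1}$ the operators $G_r$ carry a nonzero even (resp. odd) element to a nonzero odd (resp. even) one, forcing $M_{\bar 0}\neq 0$ and $M_{\bar 1}\neq 0$. Setting $\alpha=0$, the even component $\C[x]$ carries the action $L_mf(x)=\lambda^m x f(x+m)$, which is exactly $\Omega(\lambda,0)$, while the odd component $\C[y]$ carries $L_mg(y)=\lambda^m(y+\tfrac{m}{2})g(y+m)$, which is $\Omega(\lambda,\tfrac12)$. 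By Lemma \ref{Witt-module}(2) the latter is simple, so $M_{\bar 1}=\C[y]$; then (\ref{ns-module4}) gives $x\C[x]\subseteq G_{r}M_{\bar 1}\subseteq M_{\bar 0}$, and since $M_{\bar 0}$ is a $\W$-submodule of $\Omega(\lambda,0)$ lying between $x\C[x]$ and $\C[x]$, Lemma \ref{Witt-module}(2) leaves only $M_{\bar 0}=x\C[x]$ or $M_{\bar 0}=\C[x]$. The first yields $M=\Gamma$ and the second $M=\Omega_{\NS}(\lambda,0)$; a direct check with (\ref{ns-module1})--(\ref{ns-module4}) shows $\Gamma$ is genuinely a submodule, so it is the unique proper one. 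Finally $\Omega_{\NS}(\lambda,0)/\Gamma$ is spanned by the image of the constant $1$, and since $L_m1=\lambda^m x\in\Gamma$ and $G_r1=\lambda^{r-\frac12}\in\C[y]\subseteq\Gamma$, this quotient is the $1$-dimensional trivial module.

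For part (2), I would exhibit the explicit map $\varphi\colon\Gamma\to\Pi(\Omega_{\NS}(\lambda,\tfrac12))$ in the spirit of Theorem \ref{thm-1}(3), defined by $\varphi(xh(x))=h(y)$ on $\Gamma_{\bar 0}=x\C[x]$ and $\varphi(g(y))=\lambda g(x)$ on $\Gamma_{\bar 1}=\C[y]$, where in the target the even and odd $\W$-components are $\Omega(\lambda,1)$ and $\Omega(\lambda,\tfrac12)$ with parities interchanged. This $\varphi$ is visibly bijective, and $\NS$-equivariance reduces to four identities: the two $L_m$-identities hold for any nonzero scalar, whereas matching $G_r$ on both components pins the normalization down to the single relation $\varphi(g(y))=\lambda g(x)$. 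Equivalently, one can argue by transport: $\Phi$ restricts to an $\NS$-isomorphism of $\Gamma$ onto the Ramond submodule $\Xi$ of $\Omega_{\R}(\sqrt\lambda,0)$, Theorem \ref{thm-1}(3) gives $\Xi\cong\Pi(\Omega_{\R}(\sqrt\lambda,\tfrac12))$ (an $\R$-isomorphism, hence an $\NS$-isomorphism), and Proposition \ref{an iso} together with the commutation of $\Pi$ with restriction to $\NS$ identifies this with $\Pi(\Omega_{\NS}(\lambda,\tfrac12))$. Irreducibility of $\Gamma$ is then immediate, since $\Omega_{\NS}(\lambda,\tfrac12)$ is simple by the Corollary above and $\Pi$ preserves simplicity.

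The main obstacle is the rigidity step in part (1): one must correctly identify each graded piece with the right Witt module and, crucially, observe the asymmetry that the odd component has parameter $\tfrac12\neq 0$ and is therefore simple, while the even component has parameter $0$ and is not. It is precisely this asymmetry that forces $M_{\bar 1}$ to be the full odd space and then cuts $M_{\bar 0}$ down to $x\C[x]$, producing $\Gamma$ as the unique proper submodule. Once this is in place, everything else — verifying $\Gamma$ is closed, computing the trivial quotient action, and checking the four equivariance identities for $\varphi$ — is routine bookkeeping, the only genuine choice being the scalar $\lambda$ in the normalization of $\varphi$.
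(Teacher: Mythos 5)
Your proposal is correct and takes essentially the same approach as the paper: for part (1) you pin down the graded components of a proper submodule using Lemma \ref{Witt-module}(2) and the formulas (\ref{ns-module1})--(\ref{ns-module4}), just in the opposite order (the paper first forces $M_{\bar 0}=x\C[x]$ and then deduces $M_{\bar 1}=\C[y]$ from (\ref{ns-module3}), while you first force $M_{\bar 1}=\C[y]$ via simplicity of $\Omega(\lambda,\frac{1}{2})$), which is an immaterial reordering. For part (2) your map $\varphi$ is exactly the paper's isomorphism $\psi$ rescaled by the overall nonzero constant $\frac{1}{\sqrt{2\lambda}}$, so the two arguments coincide.
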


\begin{proof}
(1) It is readily shown that $\Gamma$ is an $\NS$-submodule of $\Omega_{\NS}(\lambda,0)$ by (\ref{ns-module1})-(\ref{ns-module4}). Now suppose $M=M_{\bar{0}}\oplus M_{\bar{1}}$ is an arbitrary nonzero submodule of $\Omega_{\NS}(\lambda,0)$. It follows from (\ref{ns-module4}) that $M_{\bar 0}\neq 0$. Moreover, $M_{\bar 0}\neq \Omega_{\NS}(\lambda,0)_{\bar{0}}$ by (\ref{ns-module3}), since $M$ is a proper submodule. Hence, $M_{\bar{0}}=x(\Omega_{\NS}(\lambda,0)_{\bar{0}})$ by Lemma \ref{Witt-module} (2). Then it follows from (\ref{ns-module3}) that $M_{\bar 1}=\Omega_{\NS}(\lambda,0)_{\bar 1}$. Consequently, $M=\Gamma$, and $\Omega_{\NS}(\lambda,0)/\Gamma$ is a $1$-dimensional trivial $\NS$-module.

(2) Let $\psi$ be a linear map from $\Gamma$ to $\Pi(\Omega_{\NS}(\lambda,\frac{1}{2}))$ with $\psi(xf(x))=\frac1{\sqrt{2\lambda}}f(y)$ and $\psi(f(y))=\frac{\sqrt{\lambda}}{\sqrt{2}}f(x)$. Then it follows from a direct computation that $\psi$ is an $\NS$-module isomorphism. Hence, $\Gamma\cong\Pi(\Omega_{\NS}(\lambda,\frac{1}{2}))$, and $\Gamma$ is an irreducible $\NS$-module.
\end{proof}

Similar arguments as in the proof of Theorem \ref{thm-2} yield the following paralleling result for the super-Virasoro algebra of Neveu-Schwarz type.
\begin{thm}
Let $\lambda,\mu\in\C^*, \alpha,\beta\in\C$. Then  $\Omega_{\NS}(\lambda,\alpha)\cong\Omega_{\NS}(\mu,\beta)$ as $\NS$-modules if and only if $\lambda=\mu$ and $\alpha=\beta$.
\end{thm}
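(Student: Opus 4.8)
The plan is to imitate the proof of Theorem \ref{thm-2}: reduce the statement about $\NS$-modules to a statement about the even (or odd) homogeneous parts, which are Witt modules of the form $\Omega(\lambda,\alpha)$, and then apply the isomorphism criterion for those. The sufficiency is immediate, since for $\lambda=\mu$ and $\alpha=\beta$ the two modules are defined by literally the same formulas (\ref{ns-module1})--(\ref{ns-module4}), so the identity map is an $\NS$-isomorphism. The whole work is in the necessity.

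For the necessity, I would start from an $\NS$-module isomorphism $\Phi\colon\Omega_{\NS}(\lambda,\alpha)\to\Omega_{\NS}(\mu,\beta)$. Since a morphism in the category of supermodules preserves the $\Z_2$-grading, $\Phi$ maps even part to even part, and hence restricts to an isomorphism of the even subspaces viewed as modules over $\NS_{\bar 0}=\W$. Reading off (\ref{ns-module1}), the even part $\Omega_{\NS}(\lambda,\alpha)_{\bar 0}=\C[x]$ carries exactly the $\W$-action (\ref{Virasoro-module}); that is, $\Omega_{\NS}(\lambda,\alpha)_{\bar 0}\cong\Omega(\lambda,\alpha)$ and $\Omega_{\NS}(\mu,\beta)_{\bar 0}\cong\Omega(\mu,\beta)$ as $\W$-modules. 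Thus the existence of $\Phi$ forces $\Omega(\lambda,\alpha)\cong\Omega(\mu,\beta)$ as $\W$-modules, and it remains to extract $\lambda=\mu$ and $\alpha=\beta$ from this single Witt-module isomorphism.

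The only point of substance is therefore this last step, which is exactly the fact invoked in the proof of Theorem \ref{thm-2} and which follows from the classification underlying Lemma \ref{Witt-module}. If I wanted to argue it directly rather than cite it, I would note that any isomorphism $\psi\colon\Omega(\lambda,\alpha)\to\Omega(\mu,\beta)$ intertwines $L_0$, which acts as multiplication by $x$ on $\C[x]$ in both modules; being thereby $\C[x]$-linear for the free rank-one $\C[x]$-structure, $\psi$ must be multiplication by a nonzero scalar $c$. Comparing the two sides of $\psi(L_1 f)=L_1\psi(f)$ at $f=1$ then gives $c\lambda(x+\alpha)=c\mu(x+\beta)$, whence $\lambda=\mu$ and $\alpha=\beta$. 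I do not expect any genuine obstacle here; the one line requiring care is the observation that an $\NS$-module isomorphism is parity-preserving, so that restricting to the even component is legitimate. (One could equally restrict to the odd parts, where (\ref{ns-module2}) exhibits $\Omega_{\NS}(\lambda,\alpha)_{\bar 1}\cong\Omega(\lambda,\alpha+\tfrac12)$, and run the same comparison to the same effect.)
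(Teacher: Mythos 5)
Your proposal is correct and follows essentially the same route as the paper: the paper proves this theorem by the ``similar arguments'' of Theorem \ref{thm-2}, namely restricting a supermodule isomorphism to the even parts, which by (\ref{ns-module1}) are the Witt modules $\Omega(\lambda,\alpha)$ and $\Omega(\mu,\beta)$, and then invoking the known isomorphism criterion for these. Your only addition is to prove that Witt-level criterion directly (via $\C[x]$-linearity from the $L_0$-action and comparison of $L_1$ on the generator) rather than cite it, which is a harmless and correct fleshing-out of the step the paper leaves to the literature.
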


\section{Classification of free $U(\mathfrak{h})$-modules of rank $1$ over $\R$}

Let $M=M_{\bar 0}\oplus M_{\bar 1}$ be an $\R$-module such that it is free of rank $1$ as a $U(\mathfrak{h})$-module, where $\hh=\C L_0\oplus\C G_0$ is the canonical Cartan subalgebra of $\R$.   According to the algebra structure of $\R$ in Definition \ref{def of super-Vir}, we know that $L_0G_0=G_0L_0$ and $G_0^2=L_0$. Thus $U(\hh)=\C [L_0]\oplus G_0\C [L_0]$. Take a homogeneous basis element ${\bf1}\in M$, without loss of generality, up to a parity, we may assume ${\bf 1}\in M_{\bar 0}$, and  $$M=U(\hh){\bf1}=\C [L_0]{\bf1}\oplus G_0\C [L_0]{\bf1}$$ with $M_{\bar 0}=\C[L_0]{\bf1}$ and $M_{\bar 1}=G_0\C [L_0]{\bf1}$.

 Hence, we suppose that $L_0{\bf1}=x^2{\bf1}$ and $G_0{\bf1}=x{\bf1}$. In the following we identify $M$ with $\C[x^2]{\bf1}\oplus x\C[x^2]{\bf1}$ such that $M_{\bar 0}=\C[x^2]{\bf1}$ and $M_{\bar 1}= x\C[x^2]{\bf1}$.

\begin{rem}
The free $U(\mathfrak{h})$-module M  of rank $1$ can be regarded as a free $\C[L_0]$-module of rank $2$.
\end{rem}

Since the even part $\R_{\bar 0}$ of $\R$ is isomorphic to the Witt algebra $\W$, we can naturally regard $M_{\bar 0}$ as a $\W$-module which is free of rank $1$ as a $\C[L_0]$-module. By Lemma \ref{Witt-module}, for any $m\in\Z, f(x^2)\in\C[x^2]$, there exits  $\lambda\in\C^*$, $\alpha\in\C$ such that
\begin{eqnarray}\label{Lmf2}
L_m f(x^2){\bf1}=\lambda^m(x^2+m\alpha)f(x^2+m){\bf1}.
\end{eqnarray}

We further need the following two preliminary results for later use.

\begin{lem}\label{lemma-GLm}\adddot
For $m\in\Z$, $f(x^2)\in\C[x^2]$, we have
\begin{itemize}
\item[(1)] $G_mxf(x^2){\bf1}=f(x^2+m)G_m x{\bf1}$. \item[(2)]
$G_mf(x^2){\bf1}=f(x^2+m)G_m {\bf1}$.
\end{itemize}
\end{lem}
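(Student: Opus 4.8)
The plan is to deduce both identities from a single operator relation --- that $G_m$ commutes past any polynomial in $L_0$ at the cost of an index shift --- combined with the fact that $L_0$ acts on $M$ as a scalar. First I would read off from the relations of $\R$ in Definition~\ref{def of super-Vir} the bracket $[L_0,G_m]=(\frac12\cdot0-m)G_{m}=-mG_m$, which I rewrite as the operator identity $G_mL_0=(L_0+m)G_m$ on $M$. A one-line induction on $k$ gives $G_mL_0^{\,k}=(L_0+m)^kG_m$, so by linearity
\[
G_m\,f(L_0)=f(L_0+m)\,G_m
\]
for every polynomial $f$, as operators on $M$.

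The second ingredient is that $L_0$ acts on all of $M$ as multiplication by the scalar $x^2$. Indeed, $L_0$ is central in $U(\hh)$ because $[L_0,G_0]=0$ and $G_0^2=L_0$; since every vector of $M=U(\hh){\bf 1}$ has the form $u{\bf 1}$ with $u\in U(\hh)$, we get $L_0\,u{\bf 1}=u\,L_0{\bf 1}=x^2\,u{\bf 1}$. Hence $g(L_0)$ acts as multiplication by $g(x^2)$ on both $M_{\bar0}$ and $M_{\bar1}$ for any polynomial $g$; in particular $f(L_0+m)$ acts as multiplication by $f(x^2+m)$.

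Granting these, part (2) follows by applying the operator identity to ${\bf 1}$ and using $f(L_0){\bf 1}=f(x^2){\bf 1}$, namely
\[
G_m f(x^2){\bf 1}=G_m f(L_0){\bf 1}=f(L_0+m)G_m{\bf 1}=f(x^2+m)G_m{\bf 1}.
\]
For part (1) I apply the very same identity to the odd vector $x{\bf 1}=G_0{\bf 1}$ instead of to ${\bf 1}$; since $f(L_0)(x{\bf 1})=xf(x^2){\bf 1}$ by the scalar action, this gives
\[
G_m\,xf(x^2){\bf 1}=G_m f(L_0)(x{\bf 1})=f(L_0+m)G_m(x{\bf 1})=f(x^2+m)\,G_m x{\bf 1}.
\]

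I expect no genuine obstacle: the content is routine once this framework is fixed. The one point demanding care is the scalar action of $L_0$ on the odd component $M_{\bar1}$, which must be justified through the centrality of $L_0$ inside $U(\hh)$ --- a consequence of $[L_0,G_0]=0$ and $G_0^2=L_0$ --- rather than any centrality in $\R$; recall that $[L_0,L_n]=-nL_n$ is nonzero, so $L_0$ is certainly not central in the whole algebra. Keeping the direction of the shift $L_0\mapsto L_0+m$ straight in the commutation identity is the only remaining thing to watch.
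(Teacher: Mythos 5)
Your proof is correct and takes essentially the same route as the paper's: both hinge on the relation $G_mL_0=(L_0+m)G_m$ (from $[L_0,G_m]=-mG_m$), an induction giving $G_mf(L_0)=f(L_0+m)G_m$, and then applying this operator identity to ${\bf1}$ and to $x{\bf1}=G_0{\bf1}$ while reading $f(L_0)$ and $f(L_0+m)$ as multiplication by $f(x^2)$ and $f(x^2+m)$. The only difference is cosmetic: you justify explicitly the scalar action of $L_0$ on all of $M$ via its centrality in $U(\hh)$, a point the paper leaves implicit in its identification of $M$ with $\C[x^2]{\bf1}\oplus x\C[x^2]{\bf1}$.
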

\begin{proof}
We only prove part (1). Similar arguments yield part (2).
According to the defining relations of $\R$, we have
\begin{eqnarray*}
G_mL_0 x{\bf1}=(L_0+m) G_m x{\bf1}.
\end{eqnarray*}
Then by induction on $n$, we obtain that
\begin{eqnarray*}\label{LG1}
G_mL_0^n x{\bf1}=(L_0+m)^n G_m x{\bf1},   \,\,\forall\, \ n\in\Z_+.
\end{eqnarray*}
Consequently,
\begin{eqnarray*}\label{LG2}
G_m xf(x^2){\bf1}= G_mf(L_0) x{\bf1}=f(L_0+m)G_m x{\bf1}=f(x^2+m)G_m x{\bf1},  \,\,\forall\, m\in\Z.
\end{eqnarray*}
\end{proof}

\begin{lem}\label{lemma-Gm}\adddot
For $m\in\Z$, we have
$G_m {\bf1}=\lambda^m x{\bf1}$.
\end{lem}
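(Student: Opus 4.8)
The plan is to pin down the unknown polynomial appearing in $G_m{\bf 1}$ by playing the two quadratic relations among the odd generators against the already-known action of the even part. Since $G_m$ is odd and ${\bf 1}\in M_{\bar 0}$, the element $G_m{\bf 1}$ lies in $M_{\bar 1}=x\C[x^2]{\bf 1}$, so I may write $G_m{\bf 1}=xh_m(x^2){\bf 1}$ for a unique polynomial $h_m\in\C[x^2]$; the goal is exactly to prove $h_m=\lambda^m$. Along the way I will use that $L_0=G_0^2$ forces $G_0$ to act as $G_0\big(f(x^2){\bf 1}\big)=xf(x^2){\bf 1}$ and $G_0\big(xf(x^2){\bf 1}\big)=x^2f(x^2){\bf 1}$, the first from $G_0{\bf 1}=x{\bf 1}$ together with Lemma \ref{lemma-GLm}(2), and the second from $G_0x{\bf 1}=G_0^2{\bf 1}=L_0{\bf 1}=x^2{\bf 1}$ together with Lemma \ref{lemma-GLm}(1). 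I will also use the even-part action $L_m{\bf 1}=\lambda^m(x^2+m\alpha){\bf 1}$ coming from (\ref{Lmf2}).

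First I apply the relation $2L_m=[G_0,G_m]=G_0G_m+G_mG_0$ to ${\bf 1}$. Using $G_mG_0{\bf 1}=G_m(x{\bf 1})$, $G_0G_m{\bf 1}=G_0\big(xh_m(x^2){\bf 1}\big)=x^2h_m(x^2){\bf 1}$, and $2L_m{\bf 1}=2\lambda^m(x^2+m\alpha){\bf 1}$, I can solve for the otherwise-unknown action of $G_m$ on $x{\bf 1}$, obtaining $G_m(x{\bf 1})=\big(2\lambda^m(x^2+m\alpha)-x^2h_m(x^2)\big){\bf 1}$. This step is purely formal and introduces no new unknowns beyond $h_m$.

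Next I apply the relation $G_m^2=L_{2m}$ to ${\bf 1}$. On one side $L_{2m}{\bf 1}=\lambda^{2m}(x^2+2m\alpha){\bf 1}$; on the other, $G_m^2{\bf 1}=G_m\big(xh_m(x^2){\bf 1}\big)=h_m(x^2+m)\,G_m(x{\bf 1})$ by Lemma \ref{lemma-GLm}(1), into which I substitute the expression for $G_m(x{\bf 1})$ from the previous paragraph. Writing $t=x^2$, this yields the single polynomial identity
$$h_m(t+m)\big(2\lambda^m(t+m\alpha)-t\,h_m(t)\big)=\lambda^{2m}(t+2m\alpha).$$
A direct check shows $h_m(t)=\lambda^m$ is a solution, since then the left-hand side equals $\lambda^m\cdot\lambda^m\big(2(t+m\alpha)-t\big)=\lambda^{2m}(t+2m\alpha)$.

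Finally I must show this solution is the only one, which is the \emph{main obstacle}. A degree count settles it: if $\deg h_m=d\ge 1$, then the bracket $2\lambda^m(t+m\alpha)-t\,h_m(t)$ has degree $d+1$ with leading coefficient the negative of the square of the leading coefficient of $h_m$, so the left-hand side has degree $2d+1\ge 3$, contradicting the degree $1$ on the right. Hence $h_m$ is a constant $c$; comparing the coefficients of $t$ gives $(c-\lambda^m)^2=0$, forcing $c=\lambda^m$, and the constant terms are then automatically compatible. This establishes $G_m{\bf 1}=\lambda^m x{\bf 1}$. The relations themselves are applied routinely; only the uniqueness via the degree argument requires care.
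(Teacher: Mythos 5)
Your proof is correct, and while its key mechanism coincides with the paper's, the overall organization is genuinely different. The paper runs your argument only for $m=1$: it uses $[G_0,G_1]=2L_1$ to express $G_1x{\bf 1}$ in terms of the unknown polynomial, then $G_1^2=L_2$ to obtain the quadratic identity (\ref{an eq}), and a degree comparison to conclude $g_1=\lambda$; it then extends to all of $\N$ by induction, applying $[L_1,G_k]=(\tfrac12-k)G_{k+1}$ (the coefficient $\tfrac12-k$ never vanishes for integer $k$) after first computing $L_1x{\bf 1}$ from $[L_1,G_0]=\tfrac12G_1$, and finally disposes of negative $m$ by ``similar arguments.'' You instead observe that the two quadratic relations $[G_0,G_m]=2L_m$ and $G_m^2=L_{2m}$ are available for \emph{every} $m\in\Z$, so the same degree argument pins down $h_m=\lambda^m$ for each $m$ in one stroke, with no induction and no separate treatment of negative indices. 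Your version is more uniform and self-contained (it also makes the negative case explicit rather than deferred); what the paper's route buys is that the uniqueness analysis of the polynomial equation is performed only once, with the remaining cases reduced to linear bracket manipulations, and its intermediate computation of $L_1x{\bf 1}$ anticipates the general formula for $L_mxf(x^2){\bf 1}$ derived in the proof of Theorem \ref{thm-3}. One cosmetic slip in your write-up: the quantity you call ``the negative of the square of the leading coefficient of $h_m$'' is the leading coefficient of the full product on the left-hand side, not of the bracket $2\lambda^m(t+m\alpha)-t\,h_m(t)$ itself, whose leading coefficient is just $-a$ when $h_m$ has leading coefficient $a$; the degree count $2d+1\ge 3$ and the conclusion are unaffected.
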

\begin{proof}
Suppose $G_m {\bf1}=xg_m(x^2){\bf1}\in x\C [x^2]{\bf1}$ for $m\in\Z$. Obviously it holds for $m=0$ since $G_0{\bf1}=x{\bf1}$. We first show that $G_m{\bf1}=\lambda^m x{\bf1}$ holds for $m\in\N$ by induction on $m$.  Taking $m=0$ in part (1) of Lemma \ref{lemma-GLm}, we have
\begin{eqnarray}\label{LG3}
G_0G_1 {\bf1}=G_0 xg_1(x^2){\bf1}=x^2g_1(x^2){\bf1}.
\end{eqnarray}
Then it follows from (\ref{Lmf2}), (\ref{LG3}) and $[G_0,G_1]=2L_1$  that
\begin{equation}\label{GG1}
G_1 x{\bf1}=G_1G_0 {\bf1}=2L_1 {\bf1}-G_0G_1 {\bf1}=(2\lambda(x^2+\alpha)- x^2g_1(x^2)){\bf1}.
\end{equation}
Taking $m=1$ in part (1) of Lemma \ref{lemma-GLm} and using (\ref{GG1}), we deduce that
\begin{eqnarray}\label{G1x}
G_1 xg_1(x^2){\bf1}= g_1(x^2+1)G_1 x{\bf1}= g_1(x^2+1)(2\lambda(x^2+\alpha)- x^2g_1(x^2)){\bf1}.
\end{eqnarray}
Having in mind that $2G_1G_1{\bf1}=[G_1,G_1]{\bf1}=2L_2{\bf1}$ and applying (\ref{Lmf2}), it follows that
$$G_1  xg_1(x^2){\bf1}=G_1^2{\bf1}=L_2{\bf1}=\lambda^2(x^2+2\alpha){\bf1},$$
which together with (\ref{G1x}) yields that
\begin{equation}\label{an eq}
g_1(x^2+1)(2\lambda(x^2+\alpha)- x^2g_1(x^2))=\lambda^2(x^2+2\alpha).
\end{equation}
By comparing the degrees of both polynomials in (\ref{an eq}), we obtain $g_1(x^2)=\lambda$. Thus $$G_1 {\bf1}=\lambda x{\bf1}.$$
Using this result and the relation $[L_1,G_0]{\bf1}=\frac12 G_1{\bf1}$, we have
\begin{eqnarray*}\label{L1x}
L_1 x{\bf1}=L_1G_0 {\bf1}=G_0L_1 {\bf1}+\frac12 G_1 {\bf1}=\lambda G_0(x^2+\alpha){\bf1}+\frac12 \lambda x{\bf1}=\lambda(x^3+(\frac12+\alpha)x){\bf1}.
\end{eqnarray*}
Assume that $G_m{\bf1}=\lambda^m x{\bf1}$ holds for $m=k\geq 1$.
Then
\begin{eqnarray*}\label{L1Gn}
    (\frac12-k)G_{k+1} {\bf1}\!\!\!&=\!\!\!&L_1G_k {\bf1}-G_kL_1{\bf1}\\
    \!\!\!&=\!\!\!&\lambda^kL_1 x{\bf1}-\lambda G_k (x^2+\alpha){\bf1}\nonumber\\
    \!\!\!&=\!\!\!&\lambda^{k+1}(x^3+(\frac12+\alpha)x){\bf1}-\lambda (x^2+k+\alpha)G_k {\bf1}\nonumber\\
    \!\!\!&=\!\!\!&\lambda^{k+1}(x^3+(\frac12+\alpha)x-x(x^2+k+\alpha)){\bf1}\\
    \!\!\!&=\!\!\!&(\frac12-k)\lambda^{k+1}x{\bf1},
\end{eqnarray*}
which implies that $G_m{\bf1}=\lambda^m x{\bf1}$ holds for all $m\in\N$. Similar arguments yield that $G_m{\bf1}=\lambda^m x{\bf1}$ holds for $m<0,m\in\Z$.
\end{proof}

We are now in the position to present the main result of this section, which gives a complete classification of free $U(\hh)$-modules of rank $1$ over the  Ramond algebra.
\begin{thm}\label{thm-3}
Let $\R$ be the super-Virasoro algebra of Ramond type. Let $M$ be
an $\R$-module such that the restriction of $M$ as a
$U(\hh)$-module is free of rank $1$. Then up to a parity, $M\cong
\Omega_{\R}(\lambda,\alpha)$ for some $\lambda\in\C^*$ and
$\alpha\in\C$ with the $\R$-module structure defined as in
(\ref{module1})-(\ref{module4}).
\end{thm}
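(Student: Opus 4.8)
The plan is to show that, for the $\lambda\in\C^*$ and $\alpha\in\C$ already extracted from the Witt-module structure on $M_{\bar 0}$ in (\ref{Lmf2}), the action of $\R$ on $M=\C[x^2]{\bf 1}\oplus x\C[x^2]{\bf 1}$ agrees verbatim with the action on $\Omega_{\R}(\lambda,\alpha)$ given by (\ref{module1})--(\ref{module4}). Since the two modules share the same underlying superspace, this makes the identity map an $\R$-isomorphism. Formula (\ref{module1}) is exactly (\ref{Lmf2}), and Lemma \ref{lemma-Gm} already supplies $G_m{\bf 1}=\lambda^m x{\bf 1}$, so it remains to recover (\ref{module2}), (\ref{module3}) and (\ref{module4}). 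The strategy is uniform: compute the action of each generator on the two cyclic vectors ${\bf 1}$ and $x{\bf 1}$ using only the defining brackets of $\R$ together with $L_0{\bf 1}=x^2{\bf 1}$ and $G_0{\bf 1}=x{\bf 1}$, and then propagate to an arbitrary polynomial coefficient $f(x^2)$ via the intertwining identities of Lemma \ref{lemma-GLm}.

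For the odd generators, (\ref{module3}) is immediate: by Lemma \ref{lemma-GLm}(2) and Lemma \ref{lemma-Gm}, $G_m f(x^2){\bf 1}=f(x^2+m)G_m{\bf 1}=\lambda^m x f(x^2+m){\bf 1}$. For (\ref{module4}) I would first evaluate $G_m x{\bf 1}=G_mG_0{\bf 1}$ using the anticommutator $[G_m,G_0]=2L_m$, that is $G_mG_0{\bf 1}=2L_m{\bf 1}-G_0G_m{\bf 1}$; here $G_0G_m{\bf 1}=\lambda^m G_0 x{\bf 1}=\lambda^m G_0^2{\bf 1}=\lambda^m L_0{\bf 1}=\lambda^m x^2{\bf 1}$, so that $G_m x{\bf 1}=\lambda^m(x^2+2m\alpha){\bf 1}$. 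Lemma \ref{lemma-GLm}(1) then gives $G_m x f(x^2){\bf 1}=f(x^2+m)G_m x{\bf 1}=\lambda^m(x^2+2m\alpha)f(x^2+m){\bf 1}$, which is (\ref{module4}).

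It remains to treat $L_m$ on the odd part, i.e.\ (\ref{module2}). Using the commutator $[L_m,G_0]=\frac m2 G_m$ I would compute $L_m x{\bf 1}=L_mG_0{\bf 1}=G_0L_m{\bf 1}+\frac m2 G_m{\bf 1}$; by Lemma \ref{lemma-GLm}(2) (with $m=0$) and (\ref{Lmf2}) the first term is $\lambda^m x(x^2+m\alpha){\bf 1}$, whence $L_m x{\bf 1}=\lambda^m x(x^2+m\alpha+\frac m2){\bf 1}$, matching (\ref{module2}) at $f=1$. To pass to a general $f$ I would establish the $L_m$-analogue of Lemma \ref{lemma-GLm}, namely $L_m x f(x^2){\bf 1}=f(x^2+m)L_m x{\bf 1}$: since $[L_0,G_0]=0$ one checks that $L_0$ acts as multiplication by $x^2$ on the odd part $x\C[x^2]{\bf 1}$, so that $x f(x^2){\bf 1}=f(L_0)x{\bf 1}$, and the operator identity $L_m f(L_0)=f(L_0+m)L_m$ (obtained from $[L_m,L_0]=mL_m$ by induction) yields the claim. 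Substituting the value of $L_m x{\bf 1}$ then gives (\ref{module2}).

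Having matched all four formulas (\ref{module1})--(\ref{module4}), the identity map on $\C[x^2]{\bf 1}\oplus x\C[x^2]{\bf 1}$ is an $\R$-isomorphism $M\cong\Omega_{\R}(\lambda,\alpha)$, up to the parity normalization made at the outset. The individual computations are short, since the genuine content has already been isolated in Lemma \ref{lemma-GLm} and especially in the inductive Lemma \ref{lemma-Gm}. The only point needing care is the propagation step for $L_m$ on the odd part, where one must justify $L_m x f(x^2){\bf 1}=f(x^2+m)L_m x{\bf 1}$ rather than merely quoting Lemma \ref{lemma-GLm}; this is precisely where freeness of $M$ over $U(\hh)$ enters decisively, guaranteeing that the coefficients are honest elements of $\C[x^2]$ and letting the Witt-module identification of $M_{\bar 0}$ as $\Omega(\lambda,\alpha)$ drive the entire argument.
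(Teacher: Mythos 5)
Your proposal is correct and takes essentially the same route as the paper: both arguments reduce the theorem to verifying that the action on $M=\C[x^2]{\bf 1}\oplus x\C[x^2]{\bf 1}$ matches (\ref{module1})--(\ref{module4}), with all the real work already done in Lemma \ref{lemma-GLm} and Lemma \ref{lemma-Gm}. The only (harmless) variations are that you obtain $G_m x{\bf 1}=\lambda^m(x^2+2m\alpha){\bf 1}$ from the anticommutator $[G_m,G_0]=2L_m$ applied to ${\bf 1}$, whereas the paper solves the functional equation coming from $G_m^2x{\bf 1}=L_{2m}x{\bf 1}$, and that you compute $L_m$ on the odd part at $f=1$ and then propagate via $L_mf(L_0)=f(L_0+m)L_m$, whereas the paper handles general $f$ in a single step by expanding $L_mG_0f(x^2){\bf 1}$ with the commutator $[L_m,G_0]=\frac{m}{2}G_m$.
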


\begin{proof}
For any $f(x^2)\in\mathbb{C}[x^2]$, by Lemma \ref{lemma-GLm} (2) and Lemma \ref{lemma-Gm}, we have
\begin{eqnarray}\label{Gmf2}
G_m f(x^2){\bf1} =f(x^2+m)G_m {\bf1}=\lambda^mxf(x^2+m){\bf1}.
\end{eqnarray}
Furthermore,
\begin{eqnarray}\label{Lmxf2}
L_m x f(x^2){\bf1} \!\!\!&=\!\!\!&L_m G_0 f(x^2){\bf1}\\
\!\!\!&=\!\!\!&G_0L_m f(x^2){\bf1}+\frac{m}{2}G_m f(x^2){\bf1}\nonumber\\
\!\!\!&=\!\!\!&\lambda^mG_0 ((x^2+m\alpha)f(x^2+m)){\bf1}+\frac{m}{2}\lambda^m xf(x^2+m){\bf1} \nonumber\\
\!\!\!&=\!\!\!&\lambda^m x
(x^2+m\alpha+\frac{m}{2})f(x^2+m){\bf1}.\nonumber
\end{eqnarray}
Assume that $G_m x{\bf1}=h_m(x^2){\bf1}\in\C[x^2]{\bf1}$. Since $G_mG_m x{\bf1}=L_{2m}x{\bf1}$, we have
$$G_m h_m(x^2){\bf1}=\lambda^mxh_m(x^2+m){\bf1}=\lambda^{2m}x(x^2+2m\alpha+m){\bf1},$$
which implies that $h_m(x^2+m)=\lambda^{m}(x^2+2m\alpha+m)$. Hence we have
$$G_m x{\bf1}=\lambda^{m}(x^2+2m\alpha){\bf1}.$$
Then it follows from Lemma \ref{lemma-GLm} (1) that
\begin{eqnarray}\label{Gmxf2}
G_m xf(x^2){\bf1}=\lambda^m (x^2+2m\alpha) f(x^2+m){\bf1}.
\end{eqnarray}
Consequently,  (\ref{Lmf2}), (\ref{Gmf2}), (\ref{Lmxf2}) and (\ref{Gmxf2}) imply that $M\cong \Omega_{\R}(\lambda,\alpha)$ as $\R$-modules.
\end{proof}

\section{Classification of free $U(\mathfrak{H})$-modules of rank $2$ over $\NS$}

Recall that the Neveu-Schwarz  algebra $\NS$ has a $1$-dimensional
Cartan subalgebra $\mathfrak{H}=\C L_0$, which lies in the even
part. Thus $U(\mathfrak{H})=\C[L_0]$. Since the Lie superalgebra
$\NS$ is generated by odd elements $G_{\frac{1}{2}+r}$, $r\in\mathbb{Z}$, $\NS$ does not
have non-trivial modules which are pure even or pure odd. Consequently, free
$U(\mathfrak{H})$-modules of rank $1$ do not exist.

In the remaining of this section, we will classify the free $U(\mathfrak{H})$-modules of rank $2$ over the Neveu-Schwarz algebra $\NS$.

Let $M=M_{\bar 0}\oplus M_{\bar 1}$ be an $\NS$-module such that it is free of rank $2$ as a $U(\mathfrak{H})$-module with two homogeneous basis elements $v$ and $w$. If the parities of $v$ and $w$ are the same, then $G_{\pm\frac{1}{2}}v=G_{\pm\frac{1}{2}}w=0$. Hence,
$$L_{\pm1}v=G_{\pm\frac{1}{2}}^2v=G_{\pm\frac{1}{2}}^2w=0,\quad  L_0v=\frac12[L_1,L_{-1}]v=0,\quad L_0w=\frac12[L_1,L_{-1}]w=0,$$
a contradiction. Consequently, $v$ and $w$ have different parities. Set $v={\bf 1}_{\bar 0}\in M_{\bar 0}$ and $w={\bf 1}_{\bar 1}\in M_{\bar 1}$. Then as a vector space, $M_{\bar 0}=\C[x]{\bf 1}_{\bar 0}$ and $M_{\bar 1}=\C[y]{\bf 1}_{\bar 1}$.

Since the even part of $\NS$ is isomorphic to the Witt algebra $\W$, we can naturally regard both $M_{\bar 0}$ and $M_{\bar 1}$ as $\W$-modules which are free of rank $1$ as $\C[L_0]$-modules. By Lemma \ref{Witt-module}, there exit  $\lambda,\mu\in\C^*$, $\alpha,\beta\in\C$ such that
\begin{eqnarray}\label{NS-Lm-1}
L_m h(x){\bf1}_{\bar 0}=\lambda^m(x+m\alpha)h(x+m){\bf1}_{\bar 0}
\end{eqnarray}
for all $h(x)\in\C[x], m\in\Z$ and
\begin{eqnarray}\label{NS-Lm-2}
L_m h(y){\bf1}_{\bar 1}=\mu^m(y+m\beta)h(y+m){\bf1}_{\bar 1}
\end{eqnarray}
for all $h(y)\in\C[y], m\in\Z$. We further need the following two preliminary results for later use.

\begin{lem}\label{key lem1 for NS}
Keep notations as above. Then $\mu=\lambda$ and there exists $c\in\C^*$ such that one of the following two cases occurs.
\begin{itemize}
\item[(i)]  $\beta=\alpha+\frac{1}{2}$, $G_{\frac{1}{2}}{\bf 1}_{\bar 0}=c{\bf 1}_{\bar 1}$ and $G_{\frac{1}{2}}{\bf 1}_{\bar 1}=\frac{1}{c}\lambda(x+\alpha){\bf 1}_{\bar 0}$.
\item[(ii)] $\beta=\alpha-\frac{1}{2}$,  $G_{\frac{1}{2}}{\bf 1}_{\bar 0}=\frac{1}{c}\lambda(y+\alpha-\frac{1}{2}){\bf 1}_{\bar 1}$ and $G_{\frac{1}{2}}{\bf 1}_{\bar 1}=c{\bf 1}_{\bar 0}$.
\end{itemize}
\end{lem}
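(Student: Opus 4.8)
The plan is to determine the action of the single odd generator $G_{\frac12}$ on the two free generators, using essentially one relation, $G_{\frac12}^2=\frac12[G_{\frac12},G_{\frac12}]=L_1$, together with the even actions (\ref{NS-Lm-1})--(\ref{NS-Lm-2}). Since $G_{\frac12}$ interchanges the two parities, I first write
\[
G_{\frac12}{\bf 1}_{\bar 0}=p(y){\bf 1}_{\bar 1},\qquad G_{\frac12}{\bf 1}_{\bar 1}=q(x){\bf 1}_{\bar 0}
\]
for some $p(y)\in\C[y]$ and $q(x)\in\C[x]$. Before invoking the quadratic relation I would record a commutation formula in the spirit of Lemma \ref{lemma-GLm}: from $[L_0,G_{\frac12}]=-\frac12 G_{\frac12}$ one gets $G_{\frac12}L_0=(L_0+\frac12)G_{\frac12}$, hence by induction $G_{\frac12}h(L_0)=h(L_0+\frac12)G_{\frac12}$ for every polynomial $h$. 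Applying this to each generator, and remembering that $L_0$ acts as multiplication by $x$ on $M_{\bar 0}$ and by $y$ on $M_{\bar 1}$, yields
\[
G_{\frac12}\bigl(h(x){\bf 1}_{\bar 0}\bigr)=h(y+\tfrac12)\,p(y){\bf 1}_{\bar 1},\qquad G_{\frac12}\bigl(h(y){\bf 1}_{\bar 1}\bigr)=h(x+\tfrac12)\,q(x){\bf 1}_{\bar 0}.
\]

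Next I would apply $G_{\frac12}^2=L_1$ to ${\bf 1}_{\bar 0}$ and to ${\bf 1}_{\bar 1}$ separately. Combining the two displayed formulas with $L_1{\bf 1}_{\bar 0}=\lambda(x+\alpha){\bf 1}_{\bar 0}$ and $L_1{\bf 1}_{\bar 1}=\mu(y+\beta){\bf 1}_{\bar 1}$ (the $m=1$ instances of (\ref{NS-Lm-1})--(\ref{NS-Lm-2})) produces the two polynomial identities
\[
p(x+\tfrac12)\,q(x)=\lambda(x+\alpha),\qquad q(y+\tfrac12)\,p(y)=\mu(y+\beta),
\]
which carry all the content of the lemma. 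In particular, since $\lambda,\mu\in\C^*$, neither $p$ nor $q$ can vanish.

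The decisive (and essentially only) step is a degree count on the first identity: $\C[x]$ is an integral domain and the right-hand side has degree $1$, so $\deg p+\deg q=1$, forcing exactly one of $p,q$ to be a nonzero constant $c$. If $p\equiv c\in\C^*$, the first identity gives $q(x)=\frac1c\lambda(x+\alpha)$, and substituting into the second yields $\lambda\bigl(y+\alpha+\frac12\bigr)=\mu(y+\beta)$, i.e. $\mu=\lambda$ and $\beta=\alpha+\frac12$; reading off $G_{\frac12}{\bf 1}_{\bar 0}=c{\bf 1}_{\bar 1}$ and $G_{\frac12}{\bf 1}_{\bar 1}=\frac1c\lambda(x+\alpha){\bf 1}_{\bar 0}$ gives case (i). Symmetrically, if $q\equiv c\in\C^*$, the first identity gives $p(y)=\frac1c\lambda\bigl(y+\alpha-\frac12\bigr)$, and the second forces $\mu=\lambda$ and $\beta=\alpha-\frac12$, which is case (ii). In both branches $\mu=\lambda$, as claimed.

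I do not anticipate a genuine obstacle: once the commutation formula is in place the computation is short, and the degree argument splits cleanly into precisely the two advertised cases. The only point demanding care is the bookkeeping of which variable ($x$ or $y$) each polynomial is evaluated at after $G_{\frac12}$ has carried a vector between $M_{\bar 0}$ and $M_{\bar 1}$; the commutation formula above is exactly what keeps this straight.
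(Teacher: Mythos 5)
Your proposal is correct and follows essentially the same route as the paper: write $G_{\frac12}{\bf 1}_{\bar 0}$ and $G_{\frac12}{\bf 1}_{\bar 1}$ as unknown polynomials, use the commutation $G_{\frac12}h(L_0)=h(L_0+\frac12)G_{\frac12}$ coming from $[L_0,G_{\frac12}]=-\frac12 G_{\frac12}$, apply $G_{\frac12}^2=L_1$ to both generators to get the two identities $p(x+\frac12)q(x)=\lambda(x+\alpha)$ and $q(y+\frac12)p(y)=\mu(y+\beta)$, and split by degree. Your degree-count justification for the case split is in fact slightly more explicit than the paper's, which simply asserts the two possible factorizations; everything else, including the bookkeeping of variables and the conclusion $\mu=\lambda$, $\beta=\alpha\pm\frac12$ in the two cases, matches the paper's argument.
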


\begin{proof}
Assume $G_{\frac{1}{2}}{\bf 1}_{\bar 0}=f(y){\bf 1}_{\bar 1},
G_{\frac{1}{2}}{\bf 1}_{\bar 1}=g(x){\bf 1}_{\bar 0}$. From
$[G_{\frac12},G_{\frac12}]{\bf 1}_{\bar 0}=2L_1{\bf 1}_{\bar 0}$
and $[L_0,G_{\frac12}]{\bf 1}_{\bar 1}=-\frac12G_{\frac12}{\bf
1}_{\bar 1}$, we have
\begin{equation*}\label{NS-eq-1}
G_{\frac{1}{2}}^2{\bf 1}_{\bar 0}=G_{\frac{1}{2}}f(y){\bf 1}_{\bar 1}=G_{\frac{1}{2}}f(L_0){\bf 1}_{\bar 1}=f(L_0+\frac{1}{2})G_{\frac{1}{2}}{\bf 1}_{\bar 1}=f(x+\frac{1}{2})g(x){\bf 1}_{\bar 0},
\end{equation*}
and
\begin{equation*}\label{NS-eq-2}
 G_{\frac{1}{2}}^2{\bf 1}_{\bar 0}=L_1{\bf 1}_{\bar 0}=\lambda(x+\alpha){\bf 1}_{\bar
 0},
\end{equation*}
which imply $f(x+\frac{1}{2})g(x)=\lambda(x+\alpha)$. Hence,
$f(x)=c, g(x)=\frac{1}{c}\lambda(x+\alpha)$, or
$f(x)=\frac{1}{c}\lambda(x+\alpha-\frac{1}{2}), g(x)=c$ for some
$c\in\C^*$.

Similarly, from
\begin{equation*}\label{eq-1}
G_{\frac{1}{2}}^2{\bf 1}_{\bar 1}=G_{\frac{1}{2}}g(x){\bf 1}_{\bar 0}=G_{\frac{1}{2}}g(L_0){\bf 1}_{\bar 0}=g(L_0+\frac{1}{2})G_{\frac{1}{2}}{\bf 1}_{\bar 0}=g(y+\frac{1}{2})f(y){\bf 1}_{\bar 1},
\end{equation*}
and
\begin{equation*}\label{eq-2}
G_{\frac{1}{2}}^2{\bf 1}_{\bar 1}=L_1{\bf 1}_{\bar
1}=\mu(y+\beta){\bf 1}_{\bar 1},
\end{equation*}
we know that $g(y+\frac{1}{2})f(y)=\mu(y+\beta)$.

(i) If $f(x)=c, g(x)=\frac{1}{c}\lambda(x+\alpha)$, then $g(y+\frac{1}{2})f(y)=\lambda(y+\alpha+\frac{1}{2})$. This implies that $\mu=\lambda$, and $\beta=\alpha+\frac{1}{2}$.

(ii) If $f(x)=\frac{1}{c}\lambda(x+\alpha-\frac{1}{2})$, $g(x)=c$, then $g(y+\frac{1}{2})f(y)=\lambda(y+\alpha-\frac{1}{2})$. This implies that $\mu=\lambda$, and $\beta=\alpha-\frac{1}{2}$.

We complete the proof.
\end{proof}

Due to Lemma \ref{key lem1 for NS}, up to a parity, we can assume $\beta=\alpha+\frac{1}{2}$, $G_{\frac{1}{2}}{\bf 1}_{\bar 0}={\bf 1}_{\bar 1}$ and $G_{\frac{1}{2}}{\bf 1}_{\bar 1}=\lambda(x+\alpha){\bf 1}_{\bar 0}$ without loss of generality.

\begin{lem}\label{key lem2 for NS}
For any $r\in\frac{1}{2}+\Z$, $h(x)\in\C[x], h(y)\in\C[y]$ we have
\begin{itemize}
\item[\rm(1)] $G_rh(x){\bf 1}_{\bar
0}=\lambda^{r-\frac{1}{2}}h(y+r){\bf 1}_{\bar 1}$.
\item[\rm(2)]
$G_rh(y){\bf 1}_{\bar
1}=\lambda^{r+\frac{1}{2}}(x+2r\alpha)h(x+r){\bf 1}_{\bar 0}$.
\end{itemize}
\end{lem}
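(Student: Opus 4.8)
The plan is to reduce both assertions to the two ``seed'' values $G_r{\bf 1}_{\bar 0}$ and $G_r{\bf 1}_{\bar 1}$, and then to propagate these along the index $r$ by an induction driven by the even generators $L_{\pm2}$.

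First I would record the commutation rule with the Cartan part. From $[L_0,G_r]=-rG_r$ one obtains $G_rL_0=(L_0+r)G_r$, and by induction $G_r\,h(L_0)=h(L_0+r)\,G_r$ for every polynomial $h$, exactly as in Lemma \ref{lemma-GLm}. Since $L_0$ acts as multiplication by $x$ on $M_{\bar 0}$ and by $y$ on $M_{\bar 1}$, this rule reduces the lemma to the case $h=1$; that is, it suffices to prove
$$G_r{\bf 1}_{\bar 0}=\lambda^{r-\frac12}{\bf 1}_{\bar 1},\qquad G_r{\bf 1}_{\bar 1}=\lambda^{r+\frac12}(x+2r\alpha){\bf 1}_{\bar 0}$$
for every $r\in\frac12+\Z$. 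Indeed, applying $G_r\,h(L_0)=h(L_0+r)G_r$ to ${\bf 1}_{\bar 0}$ (resp. ${\bf 1}_{\bar 1}$) and reading off the $L_0$-action on the target space yields statement (1) (resp. (2)).

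Next I would set up the induction on $r$. The base case $r=\frac12$ is the normalization fixed after Lemma \ref{key lem1 for NS}; the neighbouring case $r=-\frac12$ I would compute directly from $[L_{-1},G_{\frac12}]=-G_{-\frac12}$, using $L_{\pm1}{\bf 1}_{\bar 0}$ and $L_{\pm1}{\bf 1}_{\bar 1}$ from (\ref{NS-Lm-1})--(\ref{NS-Lm-2}) together with $\mu=\lambda$ and $\beta=\alpha+\frac12$; a short calculation gives $G_{-\frac12}{\bf 1}_{\bar 0}=\lambda^{-1}{\bf 1}_{\bar 1}$ and $G_{-\frac12}{\bf 1}_{\bar 1}=(x-\alpha){\bf 1}_{\bar 0}$, in agreement with the claim. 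For the inductive step I would use $[L_{\pm2},G_r]=(\pm1-r)G_{r\pm2}$ to express $G_{r\pm2}$ in terms of $G_r$, $L_{\pm2}$ and the commutation rule above, and check that the asserted formula is reproduced. The two polynomial identities needed here are verified by direct expansion, a factor $(\pm1-r)$ cancelling the denominator in each case, just as in the Ramond computation of Lemma \ref{lemma-Gm}.

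The one genuine obstacle --- and the reason $L_{\pm2}$ rather than $L_{\pm1}$ must drive the induction --- is the degeneration of structure constants at half-integers. The natural ladder relation $[L_1,G_r]=(\frac12-r)G_{r+1}$ has vanishing coefficient precisely at $r=\frac12$, while $[L_{-1},G_r]=(-\frac12-r)G_{r-1}$ vanishes at $r=-\frac12$, so $L_{\pm1}$ alone never escapes $\{\pm\frac12\}$. By contrast the coefficients $1-r$ and $-1-r$ occurring in $[L_{\pm2},G_r]$ lie in $\frac12+\Z$ and hence are nonzero for every half-integer $r$. Therefore, starting from the two base values $G_{\pm\frac12}$ and stepping by $\pm2$ reaches every index in each of the residue classes $\frac12+2\Z$ and $-\frac12+2\Z$, covering all of $\frac12+\Z$; the single step $G_{\frac12}\mapsto G_{-\frac12}$ via $L_{-1}$ is what bridges the two classes. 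Once the inductive identities are checked, this completes the proof.
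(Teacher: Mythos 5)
Your proof is correct, but it follows a genuinely different route from the paper's. You first reduce both parts to the two ``seed'' values $G_r{\bf 1}_{\bar 0}$, $G_r{\bf 1}_{\bar 1}$ via the commutation rule $G_r\,h(L_0)=h(L_0+r)\,G_r$ (the Neveu--Schwarz analogue of Lemma \ref{lemma-GLm}), and then propagate from the two base cases $r=\pm\frac12$ by an induction in steps of $\pm2$, exploiting that the coefficients $\pm1-r$ in $[L_{\pm2},G_r]=(\pm1-r)G_{r\pm2}$ never vanish on half-integers; your base-case values $G_{-\frac12}{\bf 1}_{\bar 0}=\lambda^{-1}{\bf 1}_{\bar 1}$ and $G_{-\frac12}{\bf 1}_{\bar 1}=(x-\alpha){\bf 1}_{\bar 0}$ check out, and so do the two inductive polynomial identities. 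The paper instead carries the polynomial $h$ through every line and obtains $G_r$ for \emph{every} $r\neq\frac32$ in a single stroke from the commutator $[L_{r-\frac12},G_{\frac12}]=(\frac{r}{2}-\frac34)G_r$ --- no induction is needed, because the $L_m$-action is already known for all $m\in\Z$ from (\ref{NS-Lm-1})--(\ref{NS-Lm-2}) --- and then patches the single degenerate index $r=\frac32$ using $[L_2,G_{-\frac12}]=\frac32 G_{\frac32}$. So the same obstruction (a vanishing structure constant) is handled in two different ways: you avoid it entirely by stepping in twos, at the cost of an extra base case, an induction, and residue-class bookkeeping; the paper accepts one exceptional case in exchange for a flat, non-inductive argument. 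Your reduction-to-generators step does make each computation lighter, since only polynomials of degree at most one appear after the reduction, whereas the paper's computations drag $h$ along throughout. Both arguments rest on the same two ingredients: the Witt-module action on each parity component and the normalized values of $G_{\frac12}$ from Lemma \ref{key lem1 for NS}.
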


\begin{proof}
We first show that part (1) and part (2) hold for $r=\frac{1}{2}$.
Using $G_{\frac12}L_0=(L_0+\frac12)G_{\frac12}$ and Lemma \ref{key
lem1 for NS},  we get
\begin{eqnarray}\label{NS-G0-1/2}
&& G_{\frac{1}{2}}h(x){\bf 1}_{\bar 0}=G_{\frac{1}{2}}h(L_0){\bf
1}_{\bar 0}=h(L_0+\frac{1}{2})G_{\frac{1}{2}}{\bf 1}_{\bar
0}=h(y+\frac{1}{2}){\bf 1}_{\bar 1}, \\
&& G_{\frac{1}{2}}h(y){\bf 1}_{\bar 1}=G_{\frac{1}{2}}h(L_0){\bf
1}_{\bar 1}=h(L_0+\frac{1}{2})G_{\frac{1}{2}}{\bf 1}_{\bar
1}=\lambda(x+\alpha)h(x+\frac{1}{2}){\bf 1}_{\bar
0}.\label{NS-G1-1/2}
\end{eqnarray}

Now for any $r\in\frac{1}{2}+\Z$ and $r\neq\frac{3}{2}$, by (\ref{NS-Lm-1}), (\ref{NS-Lm-2}), Lemma \ref{key lem1 for NS}, (\ref{NS-G0-1/2}) and  (\ref{NS-G1-1/2}), we have
\begin{eqnarray*}
&&(\frac{1}{2}r-\frac{3}{4})G_rh(x){\bf 1}_{\bar 0}\\
\!\!\!&=\!\!\!&[L_{r-\frac{1}{2}}, G_{\frac{1}{2}}]h(x){\bf 1}_{\bar 0}\\
\!\!\!&=\!\!\!&L_{r-\frac{1}{2}}G_{\frac{1}{2}}h(x){\bf 1}_{\bar 0}-G_{\frac{1}{2}}L_{r-\frac{1}{2}}h(x){\bf 1}_{\bar 0}\\
\!\!\!&=\!\!\!&L_{r-\frac{1}{2}}h(y+\frac{1}{2}){\bf 1}_{\bar 1}-G_{\frac{1}{2}}\lambda^{r-\frac{1}{2}}(x+(r-\frac{1}{2})\alpha)h(x+r-\frac{1}{2}){\bf 1}_{\bar 0}\\
\!\!\!&=\!\!\!&\lambda^{r-\frac{1}{2}}(y+(r-\frac{1}{2})(\alpha+\frac{1}{2}))h(y+r){\bf 1}_{\bar 1}-\lambda^{r-\frac{1}{2}}(y+\frac{1}{2}+(r-\frac{1}{2})\alpha)h(y+r){\bf 1}_{\bar 1}\\
\!\!\!&=\!\!\!&(\frac{1}{2}r-\frac{3}{4})\lambda^{r-\frac{1}{2}}h(y+r){\bf
1}_{\bar 1}
\end{eqnarray*}
and
\begin{eqnarray*}
&&(\frac{1}{2}r-\frac{3}{4})G_rh(y){\bf 1}_{\bar 1}\\
\!\!\!&=\!\!\!&[L_{r-\frac{1}{2}}, G_{\frac{1}{2}}]h(y){\bf 1}_{\bar 1}\\
\!\!\!&=\!\!\!&L_{r-\frac{1}{2}}G_{\frac{1}{2}}h(y){\bf 1}_{\bar 1}-G_{\frac{1}{2}}L_{r-\frac{1}{2}}h(y){\bf 1}_{\bar 1}\\
\!\!\!&=\!\!\!&L_{r-\frac{1}{2}}\lambda(x+\alpha)h(x+\frac{1}{2}){\bf 1}_{\bar 0}-G_{\frac{1}{2}}\lambda^{r-\frac{1}{2}}(y+(r-\frac{1}{2})(\alpha+\frac{1P}{2}))h(y+r-\frac{1}{2}){\bf 1}_{\bar 1}\\
\!\!\!&=\!\!\!&\lambda^{r-\frac{1}{2}}(x+(r-\frac{1}{2})\alpha)\lambda(x+r-\frac{1}{2}+\alpha)h(x+r){\bf 1}_{\bar 0}\\
&&-\lambda(x+\alpha)\lambda^{r-\frac{1}{2}}(x+\frac{1}{2}+(r-\frac{1}{2})(\alpha+\frac{1}{2}))h(x+r){\bf 1}_{\bar 0}\\
\!\!\!&=\!\!\!&(\frac{1}{2}r-\frac{3}{4})\lambda^{r+\frac{1}{2}}(x+2r\alpha)h(x+r){\bf
1}_{\bar 0}.
\end{eqnarray*}
Hence, part (1) and part (2) hold for any $r\in\frac{1}{2}+\Z$ and
$r\neq\frac{3}{2}$.

Furthermore, we have
\begin{eqnarray*}
\frac{3}{2}G_{\frac{3}{2}}h(x){\bf 1}_{\bar 0}\!\!\!&=\!\!\!&[L_{2}, G_{-\frac{1}{2}}]h(x){\bf 1}_{\bar 0}\\
\!\!\!&=\!\!\!&L_{2}G_{-\frac{1}{2}}h(x){\bf 1}_{\bar 0}-G_{-\frac{1}{2}}L_{2}h(x){\bf 1}_{\bar 0}\\
\!\!\!&=\!\!\!&L_{2}\lambda^{-1}h(y-\frac{1}{2}){\bf 1}_{\bar 1}-G_{-\frac{1}{2}}\lambda^{2}(x+2\alpha)h(x+2){\bf 1}_{\bar 0}\\
\!\!\!&=\!\!\!&\lambda(y+2(\alpha+\frac{1}{2}))h(y+\frac{3}{2}){\bf 1}_{\bar 1}-\lambda(y-\frac{1}{2}+2\alpha)h(y+\frac{3}{2}){\bf 1}_{\bar 1}\\
\!\!\!&=\!\!\!&\frac{3}{2}\lambda h(y+\frac{3}{2}){\bf 1}_{\bar 1}
\end{eqnarray*}
and
\begin{eqnarray*}
\frac{3}{2}G_{\frac{3}{2}}h(y){\bf 1}_{\bar 1}\!\!\!&=\!\!\!&[L_{2}, G_{-\frac{1}{2}}]h(y){\bf 1}_{\bar 1}\\
\!\!\!&=\!\!\!&L_2G_{-\frac{1}{2}}h(y){\bf 1}_{\bar 1}-G_{-\frac{1}{2}}L_{2}h(y){\bf 1}_{\bar 1}\\
\!\!\!&=\!\!\!&L_{2}(x-\alpha)h(x-\frac{1}{2}){\bf 1}_{\bar 0}-G_{-\frac{1}{2}}\lambda^{2}(y+2(\alpha+\frac{1}{2}))h(y+2){\bf 1}_{\bar 1}\\
\!\!\!&=\!\!\!&\lambda^{2}(x+2\alpha)(x+2-\alpha)h(x+\frac{3}{2}){\bf 1}_{\bar 0}-\lambda^2(x-\alpha)(x-\frac{1}{2}+2(\alpha+\frac{1}{2}))h(x+\frac{3}{2}){\bf 1}_{\bar 0}\\
\!\!\!&=\!\!\!&\frac{3}{2}\lambda^2(x+3\alpha)h(x+\frac{3}{2}){\bf
1}_{\bar 0}.
\end{eqnarray*}
Hence,  part (1) and part (2) also hold for $r=\frac{3}{2}$. We
complete the proof of Lemma \ref{key lem2 for NS}.
\end{proof}

We are now in the position to present the following main result of this section, the proof of which follows from (\ref{NS-Lm-1}), (\ref{NS-Lm-2}), Lemma \ref{key lem1 for NS} and  Lemma \ref{key lem2 for NS}. It gives a complete classification of free $U(\mathfrak{H})$-module of rank $2$ over the super-Virasoro algebra of Neveu-Schwarz type.
\begin{thm}\label{thm-NS}
\label{thm} Let $\NS$ be the  Neveu-Schwarz algebra. Let $M$ be  an $\NS$-module such that the restriction of $M$ as a $U(\mathfrak{H})$-module is free of rank $2$. Then up to a parity, $M\cong \Omega_{\NS}(\lambda,\alpha)$ for some $\lambda\in\C^*$ and $\alpha\in\C$ with the $\NS$-module structure defined as in (\ref{ns-module1})-(\ref{ns-module4}).
\end{thm}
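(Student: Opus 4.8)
The plan is to establish the isomorphism by exhibiting the tautological identification of underlying superspaces and then checking that it intertwines the $\NS$-action. Since $\NS$ is generated by $\{L_m\mid m\in\Z\}\cup\{G_r\mid r\in\frac12+\Z\}$, it suffices to verify that the action of these generators on $M$ matches the prescribed action (\ref{ns-module1})--(\ref{ns-module4}) on $\Omega_{\NS}(\lambda,\alpha)$; everything else follows because the generators determine the action on all of $M=\C[x]{\bf 1}_{\bar 0}\oplus\C[y]{\bf 1}_{\bar 1}$.

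First I would invoke the reductions already assembled above. Because $M$ is free of rank $2$ and a pure-parity free module is impossible over $\NS$, the two $U(\mathfrak{H})$-generators have opposite parities, so we may fix ${\bf 1}_{\bar 0}\in M_{\bar 0}$, ${\bf 1}_{\bar 1}\in M_{\bar 1}$ and write $M_{\bar 0}=\C[x]{\bf 1}_{\bar 0}$, $M_{\bar 1}=\C[y]{\bf 1}_{\bar 1}$, where $x$ and $y$ denote the action of $L_0$ on $M_{\bar 0}$ and $M_{\bar 1}$ respectively. Viewing $M_{\bar 0}$ and $M_{\bar 1}$ as free rank-one $\W$-modules and applying Lemma \ref{Witt-module}(3) yields scalars $\lambda,\mu\in\C^*$, $\alpha,\beta\in\C$ realizing the even action in the form (\ref{NS-Lm-1}) and (\ref{NS-Lm-2}). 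Lemma \ref{key lem1 for NS} then collapses the excess parameters: it forces $\mu=\lambda$ and, after rescaling ${\bf 1}_{\bar 1}$ to absorb the constant $c$ and applying the parity-change functor $\Pi$ in case (ii) of that lemma, normalizes to $\beta=\alpha+\frac12$, $G_{\frac12}{\bf 1}_{\bar 0}={\bf 1}_{\bar 1}$, $G_{\frac12}{\bf 1}_{\bar 1}=\lambda(x+\alpha){\bf 1}_{\bar 0}$. Finally Lemma \ref{key lem2 for NS} propagates this single odd datum to the full odd action, giving $G_rh(x){\bf 1}_{\bar 0}=\lambda^{r-\frac12}h(y+r){\bf 1}_{\bar 1}$ and $G_rh(y){\bf 1}_{\bar 1}=\lambda^{r+\frac12}(x+2r\alpha)h(x+r){\bf 1}_{\bar 0}$ for every $r\in\frac12+\Z$.

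With these explicit formulas the conclusion is immediate. Define the even linear bijection $\Psi\colon M\to\Omega_{\NS}(\lambda,\alpha)$ by $\Psi(h(x){\bf 1}_{\bar 0})=h(x)\in\C[x]=\V_{\bar 0}$ and $\Psi(h(y){\bf 1}_{\bar 1})=h(y)\in\C[y]=\V_{\bar 1}$, where $\V$ is the underlying superspace of $\Omega_{\NS}(\lambda,\alpha)$. Comparing (\ref{NS-Lm-1}) and (\ref{NS-Lm-2}) (with $\mu=\lambda$ and $\beta=\alpha+\frac12$) against (\ref{ns-module1}) and (\ref{ns-module2}) shows $\Psi$ commutes with every $L_m$, and comparing parts (1) and (2) of Lemma \ref{key lem2 for NS} against (\ref{ns-module3}) and (\ref{ns-module4}) shows $\Psi$ commutes with every $G_r$. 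Because these operators generate $\NS$, $\Psi$ is an $\NS$-module isomorphism, so $M\cong\Omega_{\NS}(\lambda,\alpha)$ up to the parity twist absorbed in the normalization.

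I expect the theorem itself to present no serious obstacle---its proof is a direct matching of coefficients once the two lemmas are in place---so the genuine difficulty sits inside Lemma \ref{key lem2 for NS}. There the bootstrap $G_r=(\frac{r}{2}-\frac34)^{-1}[L_{r-\frac12},G_{\frac12}]$ breaks down at the single value $r=\frac32$, where the structure constant $\frac{r}{2}-\frac34$ vanishes, so that value of the odd action must be recovered separately from $[L_2,G_{-\frac12}]=\frac32 G_{\frac32}$. The only care needed at the level of the theorem is bookkeeping: tracking that the normalization $\beta=\alpha+\frac12$ rather than $\beta=\alpha-\frac12$ is exactly what the clause ``up to a parity'' in the statement accounts for.
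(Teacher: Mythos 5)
Your proposal is correct and follows exactly the paper's own route: the paper's proof of Theorem \ref{thm-NS} is precisely the assembly of (\ref{NS-Lm-1}), (\ref{NS-Lm-2}), Lemma \ref{key lem1 for NS} (with the constant $c$ and case (ii) absorbed ``up to a parity,'' just as you do), and Lemma \ref{key lem2 for NS}, followed by the tautological identification with $\Omega_{\NS}(\lambda,\alpha)$. You also correctly locate the only delicate point, the vanishing structure constant at $r=\frac32$ handled via $[L_2,G_{-\frac12}]=\frac32 G_{\frac32}$, which is exactly where the paper's Lemma \ref{key lem2 for NS} makes a separate computation.
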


As a direct consequence of Proposition \ref{Connection of modules over R and NS}, Theorem \ref{thm-3} and Theorem \ref{thm-NS}, we have
\begin{coro}
The category of free $U(\hh)$-modules of rank $1$ over the Ramond algebra $\R$ is equivalent to the category of free $U(\mathfrak{H})$-modules of rank $2$ over the  Neveu-Schwarz algebra $\NS$.
\end{coro}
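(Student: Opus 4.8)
The plan is to produce a functor between the two categories and check that it is essentially surjective, faithful and full. Write $\mathcal{C}_1$ for the category of free $U(\hh)$-modules of rank $1$ over $\R$ and $\mathcal{C}_2$ for the category of free $U(\mathfrak{H})$-modules of rank $2$ over $\NS$, in each case with module homomorphisms as morphisms. By Theorem \ref{thm-3} every object of $\mathcal{C}_1$ is isomorphic, up to parity, to some $\Omega_{\R}(\lambda,\alpha)$, and by Theorem \ref{thm-NS} every object of $\mathcal{C}_2$ is isomorphic, up to parity, to some $\Omega_{\NS}(\lambda,\alpha)$. Thus both categories admit skeletons indexed by the same data $(\lambda,\alpha)\in\C^*\times\C$ together with a parity, with the functor $\Pi$ acting compatibly on each side. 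This already matches isomorphism classes, and the remaining task is to promote the matching to an equivalence.

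First I would introduce the natural candidate functor. The embedding $\sigma:\NS\hookrightarrow\R$ of the preceding discussion gives a restriction functor $\mathrm{Res}_{\sigma}$ from $\mathcal{C}_1$ to the category of $\NS$-modules. Since $\sigma(L_0)=\tfrac12 L_0$ acts on $\Omega_{\R}(\lambda,\alpha)=\C[x^2]\oplus x\C[x^2]$ as multiplication by $\tfrac12 x^2$, the set $\{1,x\}$ is a $U(\mathfrak{H})$-basis, so $\mathrm{Res}_{\sigma}$ carries a free $U(\hh)$-module of rank $1$ to a free $U(\mathfrak{H})$-module of rank $2$, and hence lands in $\mathcal{C}_2$. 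By Proposition \ref{an iso} it sends $\Omega_{\R}(\lambda,\alpha)$ to $\Omega_{\NS}(\lambda^2,\alpha)$, and since every element of $\C^*$ is a square, Theorem \ref{thm-NS} shows $\mathrm{Res}_{\sigma}$ is essentially surjective; it is automatically faithful, as restriction does not alter underlying linear maps.

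The main obstacle is fullness, and here a genuine subtlety surfaces: $\mathrm{Res}_{\sigma}$ is two-to-one on isomorphism classes. Indeed, multiplying the odd part by $-1$ is an $\NS$-isomorphism $\mathrm{Res}_{\sigma}\,\Omega_{\R}(\lambda,\alpha)\cong\mathrm{Res}_{\sigma}\,\Omega_{\R}(-\lambda,\alpha)$, whereas $\Omega_{\R}(\lambda,\alpha)\not\cong\Omega_{\R}(-\lambda,\alpha)$ as $\R$-modules by Theorem \ref{thm-2}; consequently there is a nonzero $\NS$-homomorphism that is not the restriction of any $\R$-homomorphism, so naive restriction fails to be full and cannot itself be the equivalence. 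I would therefore take as the equivalence the parameter-matching functor $F$ defined on skeletons by $\Omega_{\R}(\lambda,\alpha)\mapsto\Omega_{\NS}(\lambda,\alpha)$, respecting parity and extended off the skeleton through the isomorphisms furnished by Theorems \ref{thm-3} and \ref{thm-NS}, using $\mathrm{Res}_{\sigma}$ and Proposition \ref{an iso} only to certify that the two families of modules are built in parallel; equivalently, I would show that $\mathcal{C}_1$ and $\mathcal{C}_2$ are both equivalent to one and the same explicitly described skeleton. Fullness and faithfulness of $F$ then reduce to matching Hom-spaces. When $\alpha,\beta\neq0$ all four objects are simple (Theorem \ref{thm-1}(1), its Neveu-Schwarz counterpart, and Proposition \ref{Connection of modules over R and NS}), so Schur's lemma makes $\Hom$ one-dimensional when the parameters agree and zero otherwise on both sides. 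When $\alpha=0$ the objects are uniserial of length two with matching socle and top: simple socle $\Xi\cong\Pi(\Omega_{\R}(\lambda,\tfrac12))$, resp. $\Gamma\cong\Pi(\Omega_{\NS}(\lambda,\tfrac12))$, and one-dimensional trivial top (Theorem \ref{thm-1}(2),(3) and the corresponding $\NS$-proposition), so the endomorphism algebras and the cross-Hom-spaces again coincide. The delicate part I expect to be this last bookkeeping: verifying that no homomorphisms beyond these occur and that composition is preserved, so that $F$ is fully faithful and hence an equivalence.
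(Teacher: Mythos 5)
Your proposal is correct, and it takes a genuinely different — and more substantive — route than the paper: the paper's entire proof of this corollary is the single sentence that it is ``a direct consequence'' of Proposition \ref{Connection of modules over R and NS}, Theorem \ref{thm-3} and Theorem \ref{thm-NS}, so in effect the paper only matches isomorphism classes of objects and never discusses morphisms, whereas an equivalence of categories requires matching Hom-spaces as well. Your observation that the obvious candidate functor, restriction along $\sigma$, is essentially surjective and faithful but \emph{not} full is accurate and is exactly the point the paper glosses over: by Proposition \ref{an iso} one has $\mathrm{Res}_{\sigma}\,\Omega_{\R}(\lambda,\alpha)\cong\Omega_{\NS}(\lambda^2,\alpha)\cong\mathrm{Res}_{\sigma}\,\Omega_{\R}(-\lambda,\alpha)$ (your sign-on-the-odd-part intertwiner checks out against (\ref{module1})--(\ref{module4}), since $\sigma(G_r)$ involves $\lambda^{2r}$ with $2r$ odd), while $\Omega_{\R}(\lambda,\alpha)\not\cong\Omega_{\R}(-\lambda,\alpha)$ by Theorem \ref{thm-2}, so $\Hom_{\R}=0$ but $\Hom_{\NS}\neq 0$. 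Since Proposition \ref{Connection of modules over R and NS} (i.e.\ restriction) is precisely what the paper cites, your counterexample shows the paper's one-line justification cannot be read as ``restriction is the equivalence''; the equivalence has to be implemented by parameter matching, as you do. What your approach buys is an actual proof; what the paper's buys is brevity, at the cost of leaving the morphism-level verification implicit.

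Two small points would finish the bookkeeping you postpone at the end. First, in the simple case $\alpha\neq 0$ you need Schur's lemma for infinite-dimensional modules; this is Dixmier's version, available because all modules here have countable dimension over $\C$. Second, for $\alpha=0$ the claim ``no homomorphisms beyond these occur'' follows cleanly from torsion-freeness: every object of either category is free over the relevant Cartan subalgebra, hence $\C[L_0]$-torsion-free, so no object contains a trivial submodule (a trivial vector would be killed by $L_0$); therefore no nonzero morphism can factor through the one-dimensional top of $\Omega_{\R}(\lambda,0)$ or $\Omega_{\NS}(\lambda,0)$, and the only nonzero non-invertible morphisms on either side are the scalar multiples of the socle embeddings $\Pi\bigl(\Omega(\lambda,\tfrac12)\bigr)\hookrightarrow\Omega(\lambda,0)$ furnished by Theorem \ref{thm-1}(3) and its Neveu-Schwarz counterpart. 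With these two remarks the two skeletons are isomorphic as categories, compositions visibly correspond, and your functor $F$ is an equivalence.
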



\begin{thebibliography}{9999}\vs{0pt}

\vskip5pt\small
\parindent=8ex\parskip=2pt\baselineskip=2pt

\bibitem{CG} H. Chen, X. Guo, Non-weight Modules over the Heisenberg-Virasoro and $W(2, 2)$ algebras, J. Algebra
Appl. 16 (2017) 1750097.

\bibitem{CG1} H. Chen, X. Guo, A new family of modules over the Virasoro algebra, J. Algebra 457 (2016),
73-105.

\bibitem{CZ} Y. Cai, K. Zhao, Module structure on ${\mathcal U}(H)$ for basic Lie superalgebras, Toyama Math. J. 37 (2015), 55-72.

\bibitem{CY} Q. Chen, Y. Yao, Non-weight modules over algebras related to the Virasoro algebra, J. Geom. Phys.  134 (2018) 11-18.

\bibitem{HCS} J. Han, Q. Chen, Y. Su, Modules over the algebras $Vir(a, b)$, Linear Algebra Appl. 515  (2017) 11-23.

\bibitem{KS} I. Kaplansky, L. J. Santharoubane, Harish-Chandra modules over the
Virasoro algebra, Infinite-dimensional groups with applications
(Berkeley, Calif. 1984)  217-231, Math. Sci. Res. Inst. Publ.  4,
Springer, New York, 1985.

\bibitem{LG} X. Liu, X. Guo, $U(\hh)$-free modules over the Block algebra $ U({\mathcal B}(q))$, arXiv:1801.03232v1.

\bibitem{LPX1} D. Liu, Y. Pei, L. Xia, Whittaker modules for the super-Virasoro algebras,
J. Algebra Appl. (to appear), arXiv:1810.12577.

\bibitem{LPX2} D. Liu, Y. Pei, L. Xia, Simple restricted modules for Neveu-Schwarz algebra, arXiv:1812.03435v1.

\bibitem{LZ1} R. Lu, K. Zhao, Irreducible Virasoro modules from irreducible Weyl modules, J. Algebra 414 (2014), 271-287.

\bibitem{MP} F. Martin, C. Prieto, Construction of simple non-weight ${\mathfrak sl}(2)$-modules of arbitrary rank, J. Algebra
472 (2017) 172-194.

\bibitem{N1} J. Nilsson, Simple ${\mathfrak sl}_{n+1}$-module structures on $U(\hh)$, J. Algebra 424  (2015) 294-329.

\bibitem{N2}  J. Nilsson, $U(\hh)$-free modules and coherent families, J. Pure Appl. Algebra  220(4) (2016)  1475-1488.

\bibitem{NS}  A. Neveu, J. H. Schwarz, Factorizable dual model of pions,  Nucl. Phys. B   31 (1971) 86-112.

\bibitem{RP}  P. Ramond, Dual theory of free fermions,   Phys. Rev. D  3  (1971)  2451-2418.

\bibitem{M} O. Mathieu, Classification of Harish-Chandra modules over the Virasoro
Lie algebra, Invent. Math. 107 (1992)  225-234.

\bibitem{Su1} Y. Su, A classification of indecomposable $sl_2(\C)$-modules and
a conjecture of Kac on irreducible modules over the Virasoro
algebra, J. Algebra 161 (1993) 33-46.

\bibitem{Su} Y. Su, Classification of Harish-Chandra modules over the super-Virasoro algebras,
Comm. Algebra 23 (1995)  3653-3675.

\bibitem{TZ} H. Tan, K. Zhao, $W_n^+$ and $W_n$-module structures on $U(\hh_n)$, J. Algebra 424 (2015) 357-375.

\bibitem{WZ} Y. Wang, H. Zhang, A class of non-weight modules over the Schr\"{o}dinger-Virasoro algebras, arXiv:1809.05236.



\end{thebibliography}
\end{document}